%% file: main.tex
\documentclass{article}
\usepackage[a4paper, margin=3cm, bottom=4cm]{geometry}
\usepackage[utf8]{inputenc}
\usepackage[T1]{fontenc}
\usepackage{lmodern}

\usepackage[
    backend=biber,
    style=alphabetic, maxalphanames=5, % write all initials, e.g "FBHW21" instead of "Foo+21"
    maxbibnames=100, maxsortnames=100, % disable "et al." in \printbibliography
    sorting=nyt,
    natbib=true,
    url=false,doi=false,isbn=false,eprint=false
]{biblatex}
\usepackage[pdftex]{graphicx} % "[pdftex]" for arxiv compiler
\usepackage{subcaption}
\graphicspath{ {./images/} }
\usepackage{enumitem}
\usepackage[normalem]{ulem}
\usepackage[toc,page]{appendix}

\usepackage{tikz}
\usetikzlibrary{
    arrows.meta,
    backgrounds,
    calc,
    fit
}

\usepackage[bookmarks, bookmarksnumbered,
			colorlinks=true,
            linkcolor = blue,
            urlcolor  = blue,
            citecolor = teal,
            hypertexnames=false  % workaround for texlive 2025's bad handling of autoref with numberwithin
]{hyperref}

\newcommand\fnsurl[1]{{\footnotesize\url{#1}}}

\usepackage{myquicksetup}
\numberwithin{definition}{section}
\numberwithin{theorem}{section}
\numberwithin{corollary}{section}
\numberwithin{proposition}{section}
\numberwithin{lemma}{section}
\numberwithin{claim}{section}
\numberwithin{fact}{section}
\numberwithin{remark}{section}
\numberwithin{example}{section}
\numberwithin{equation}{section}
\mathtoolsset{showonlyrefs} % only display equation numbers for equations that are referenced to. https://tex.stackexchange.com/a/4729

\usepackage{mylivemacros}
\usepackage{sinkhorn_small} % project-specific macros

\newif\ifextended  % extended version: include all appendices
\extendedtrue
\title{\vspace{-1em} Sharp $O(1/k)$ convergence rate for the Sinkhorn algorithm\\ via a local analysis}
\date{\today}

\author{Guillaume Wang\footnote{Courant Institute School, New York
University~ \texttt{guillaume.wang@nyu.edu}}}

\hypersetup{
    pdftitle={Sharp O(1/k) convergence rate for the Sinkhorn algorithm
    via a local analysis},
    pdfauthor={Guillaume Wang},
    pdfkeywords={Sinkhorn algorithm, matrix scaling, entropic optimal transport}
}

\begin{document}
\maketitle

% \subfile{sections/abstract}
% \subfile{sections/1_intro}
% \subfile{sections/2_}
% \subfile{sections/3_}
% \subfile{sections/acknowledgments}

\begin{abstract}
    We prove that the Sinkhorn algorithm converges at the rate of $O(1/k)$ in $\ell_1$-norm marginal error and in joint relative entropy, which is known to be sharp in the asymptotically scalable case.
    The proof is based on examining the bipartite graph associated to the entropy-regularized optimal transport problem, and treating differently the edges that are assigned a positive mass in the optimal transport plan vs.\ those that are not.
    This yields a local convergence bound with the sharp rate, which is bootstrapped into a global bound using the author's previous result in \cite{wang2026almost} where we showed an almost-sharp rate up to a logarithmic factor.
    % The same rate is also shown for the sequence of even iterates (resp.\ odd iterates) in the non-scalable case, i.e., when there is no feasible transport plan.
\end{abstract}

\section{Introduction} \label{sec:intro}

Let $\mu \in \Delta_m$ and $\nu \in \Delta_n$, where $\Delta_m$ denotes the probability simplex in dimension $m$, such that $\mu_{\min} = \min_i \mu_i, \nu_{\min} = \min_j \nu_j > 0$.
Let $C \in (\RR \cup \{+\infty\})^{m \times n}$ and $\EEE = \left\{ (i,j);~ C_{ij} < \infty \right\}$, and suppose that the bipartite graph $(\{1 \dots m \} \sqcup \{1 \dots n\}, \EEE)$ 
% is connected.
has no isolated vertex.
Let $\tau>0$ and consider
the entropy-regularized optimal transport (EOT) problem
\begin{equation} \label{eq:intro:EOT}
    \min_{\pi \in \Delta_{\EEE}}~
    \sum_{(i,j) \in \EEE} C_{ij} \pi_{ij} + \tau \Hdiv{\pi}{\mu \otimes \nu}
    ~~~~\text{subject to}~~~~
    X_\sharp \pi = \mu
    ~~\text{and}~~
    Y_\sharp \pi = \nu,
\end{equation}
where
for all $\pi \in \Delta_\EEE$,
\begin{equation}
    X_\sharp \pi = \bigg( \sum_{j: (i,j) \in \EEE} \pi_{ij} \bigg)_i \in \Delta_m
    ~~\qquad\text{and}\qquad~~
    Y_\sharp \pi = \bigg( \sum_{i: (i,j) \in \EEE} \pi_{ij} \bigg)_j \in \Delta_n
\end{equation}
denote the first and second marginals, and 
$\Hdiv{\pi}{\mu \otimes \nu} = \sum_{(i,j) \in \EEE} \pi_{ij} \log \frac{\pi_{ij}}{\mu_i \nu_j}$
denotes the relative entropy w.r.t.\ a product distribution.

% Throughout this paper except for \autoref{sec:nonscal},
Throughout this paper,
we assume that \eqref{eq:intro:EOT} admits a feasible solution.%
\footnote{To see that the existence of a feasible solution is not guaranteed, as a simple example, consider the case where $m=n=2$, $\EEE = \{ (1,1), (1,2), (2,2) \}$, and $\mu_2 > \nu_2$. Then one cannot have both $\pi_{22} = \mu_2$ and $\pi_{12} + \pi_{22} = \nu_2$.}
By strict convexity of $H$, there exists a unique optimal solution, denoted $\pi^*$, and we denote its support by $\SSS = \left\{ (i,j);~ \pi^*_{ij} > 0 \right\}$.
Note that by definition $\SSS \subset \EEE$, and the inclusion can be strict.
In other words, following the terminology of matrix scaling, we assume we are in the \emph{asymptotically scalable case} \cite[Section~4]{idel2016review}.

The primal iterates of the Sinkhorn algorithm---also known as RAS method or iterative proportional fitting---are the $\pi^k \in \Delta_{\EEE}$, for $k \geq 0$, defined by
$\pi^0_{ij} \propto e^{-C_{ij}/\tau} \mu_i \nu_j$ and
\begin{align} \label{eq:intro:primal_upd}
    &\text{for $k$ even,}~~
    \pi^{k+1}_{ij}
    = \frac{\mu_i}{(X_\sharp \pi^k)_i}\, \pi^k_{ij}
    &
    &\text{and}
    &
    &\text{for $k$ odd,}~~
    \pi^{k+1}_{ij} = \frac{\nu_j}{(Y_\sharp \pi^k)_j}\, \pi^k_{ij}
\end{align}
for all $(i,j) \in \EEE$.
The optimality metric we use to measure the eventual convergence of the Sinkhorn algorithm is the $\ell_1$-norm error of the marginals,
as is standard in the literature:
\begin{equation}
    E_k = \norm{X_\sharp \pi^k - \mu}_1 + \norm{Y_\sharp \pi^k - \nu}_1.
\end{equation}
Indeed one can show that due to the specific choice of $\pi^0$, $\pi^k \to \pi^*$ if and only if $E_k \to 0$ \cite{altschuler2017near}.

\paragraph{State of the art and contributions.}
It is known since the work of \citet{sinkhorn1967concerning} that $\pi^k \to \pi^*$ as $k \to \infty$, that is, $E_k = o(1)$.
\citet[page~19]{soules1991rate} gave an explicit example of a problem instance for which $E_k = \Theta(1/k)$.
\citet{leger2021gradient} showed that $E_k \leq O(1/\sqrt{k})$.
\citet[Proposition~3]{qu2025sinkhorn} showed that if $\SSS \neq \EEE$, then necessarily $E_k \geq \Omega(1/k)$.
% (see also \cite{achilles1993implications} for an earlier related negative result).
The author showed recently in \cite{wang2026almost} that $E_k \leq O((\log k)/k)$.
We refer the reader to that last reference for a more detailed description of the state of the art in the cases where $\SSS = \EEE$ or $\SSS = \EEE = \{1 \dots m \} \times \{1 \dots n\}$,
since our focus here is on the general case where both inclusions can be strict.

In this paper, we close the logarithmic gap between the upper bound of \cite{wang2026almost} and the lower bound of \cite{qu2025sinkhorn}, by showing that $E_k \leq O(1/k)$. Formally, we show the following.
A more precise version with explicit constants is given in \autoref{thm:mainres:glob}.

\begin{theorem} \label{thm:intro:mainres}
    Suppose that \eqref{eq:intro:EOT} admits a feasible solution.
    There exist constants $B_1, B_2>0$ dependent only on $\mu, \nu$, and $\EEE$ such that
    \begin{equation}
        \forall k \geq B_1,~~
        E_k \leq \frac{B_2}{k} \left( 1 + \frac{\max_{(i,j) \in \EEE} C_{ij} - \min_{ij} C_{ij}}{\tau} \right).
    \end{equation}
\end{theorem}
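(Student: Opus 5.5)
We follow the strategy announced in the abstract: first prove a \emph{local} bound of order $1/k$ valid once the iterates are close to $\pi^*$, and then use the global almost-sharp bound $E_k \le O((\log k)/k)$ of \cite{wang2026almost} to certify that this closeness holds after $B_1$ iterations. We work in dual variables: write $\pi^k_{ij} = \mu_i\nu_j \exp((f^k_i + g^k_j - C_{ij})/\tau)$, and split the edge set into the support $\SSS$ of $\pi^*$ and its complement $\EEE \setminus \SSS$. The natural Lyapunov quantity is the dual suboptimality $\Phi_k = \Hdiv{\pi^*}{\pi^k}$, equivalently the gap in the dual EOT objective. Its one-step decrease is $\Phi_k - \Phi_{k+1} = \Hdiv{\mu}{X_\sharp\pi^k}$ (or the $\nu$ analogue). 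By Pinsker this is at least $\tfrac12 E_k^2$, up to the convention that only one marginal is wrong at each step.

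The key local step is to upgrade the classical inequality $\Phi_k \lesssim E_k \cdot \|(f^k,g^k)-(f^*,g^*)\|$, which only gives $O(1/\sqrt k)$ or $O((\log k)/k)$, to a bound of the form $\Phi_k \le c\,E_k$. The constant $c$ should not grow as $k\to\infty$. We decompose $\Phi_k$ into two contributions, one from edges in $\SSS$ and one from edges in $\EEE\setminus\SSS$.
\begin{itemize}[label=$\circ$]
\item On $\SSS$, the restricted problem is exactly scalable, with strictly positive optimal entries. Near $\pi^*$ the dual objective restricted to $\SSS$ is locally strongly convex modulo the trivial shift $(f+t,g-t)$, by a spectral gap of the Laplacian of each connected component of $(\cdot,\SSS)$. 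This gives $\Phi^{\SSS}_k \lesssim \|\text{dual error on }\SSS\|^2 \lesssim (\text{marginal error})^2$, which is harmless.
\item On $\EEE\setminus\SSS$, the mass $m_k = \sum_{(i,j)\notin\SSS}\pi^k_{ij}$ tends to $0$. Its entropy-type contribution is controlled linearly: $\Phi^{\EEE\setminus\SSS}_k \lesssim m_k$. Since $\pi^*$ puts no mass there and the marginals of $\pi^*$ are exactly $\mu,\nu$, a flow/cut argument bounds $m_k$ by $E_k$ plus terms quadratic in the error on $\SSS$. Concretely, the obstruction to feasibility on $\SSS$ alone is a Hall-type tight cut: the sets $I\times J$ with $\mu(I)=\nu(J)$ where every edge leaving the cut lies outside $\SSS$. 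The mass crossing such a tight cut equals a marginal discrepancy, which is at most $E_k$.
\end{itemize}
Combining the two parts gives $\Phi_k \le c(E_k + E_k^2) \le 2cE_k$. Feeding this into the decrease identity yields $\Phi_k - \Phi_{k+1} \ge \Phi_k^2/(8c^2)$ for $k \ge k_0$, hence $\Phi_k \le O(1/k)$. Finally, $E_k$ is bounded by $O(1/k)$ via $\tfrac12 E_k^2 \le \Phi_k-\Phi_{k+1}$ summed over a dyadic block together with monotonicity, or more directly from $E_k \ge \Phi_k/(2c)$ applied along the sequence.

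For the bootstrapping, \cite{wang2026almost} gives $E_k \le O((\log k)/k)$. Together with the standard bound $\|f^k\|_{\mathrm{osc}} \lesssim (\max C-\min C)/\tau + \log(1/\mu_{\min}\nu_{\min})$, this implies that after $B_1$ iterations the iterates lie in the neighborhood where the local strong convexity and the cut estimate hold. The factor $(1+(\max C-\min C)/\tau)$ enters through this oscillation bound on the duals, which appears in the constant $c$.

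The main obstacle is the cut estimate $m_k \lesssim E_k$ for the mass on $\EEE\setminus\SSS$. It needs a combinatorial description of $\SSS$, the union of all edges used by some feasible plan, which is the support of the relative interior of the feasible polytope. It also needs a quantitative Hall/max-flow argument whose constants depend only on $\mu,\nu,\EEE$. We would first try an iterative decomposition of $\EEE\setminus\SSS$ into layers of tight cuts, each layer contributing mass bounded by marginal errors plus the mass of previously handled layers.
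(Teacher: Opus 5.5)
Your local architecture is the paper's. You split $\Hdiv{\pi^*}{\pi^k}$ into its $\SSS$ and $\EEE\setminus\SSS$ parts (\autoref{lm:prelim:decomp_Vk}), prove $V_k\le\tau A E_k$ near $\pi^*$, deduce $V_{k+1}\le V_k-V_k^2/(2\tau A^2)$, and finish with the doubling trick. Your alternative ``$E_k\ge\Phi_k/(2c)$'' is a lower bound on $E_k$ and gives nothing.

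Your tight-cut idea for the off-support mass is the right one, and it needs no quadratic correction. Take any set $U$ of DM components closed under $\to$, and write $I_U=\bigcup_{q\in U}I_q$, $J_U=\bigcup_{q\in U}J_q$. The mass entering $J_U$ from outside $I_U$ is exactly $\sum_{j\in J_U}((Y_\sharp\pi)_j-\nu_j)-\sum_{i\in I_U}((X_\sharp\pi)_i-\mu_i)$. Summing over the level sets of the DAG depth gives $m_k\le\ell E_k$; \autoref{lm:mainres:bound_offsupport} gets $\ell/2$ by centering.

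On $\SSS$ you replace the paper's symmetrized-Bregman/path bound on the dual error (\autoref{lm:mainres:bound_insupport}) by local strong convexity. That can work, but not with your bookkeeping. The modulus is of order $\pi^*_{\min}\lambda/\tau$, with $\lambda$ the smallest nonzero Laplacian eigenvalue of the graph $\SSS$. This gives $\sum_{\SSS}D_h\le(E_k+2m_k)^2/(\pi^*_{\min}\lambda)$, and $1/\pi^*_{\min}$ can be $e^{\Theta(\osc_\EEE(C)/\tau)}$. Absorbing this via $c(E_k+E_k^2)\le 2cE_k$ makes $c$ exponential in $\osc_\EEE(C)/\tau$. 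To cancel the $1/\pi^*_{\min}$, you must use that $E_k+2m_k\le 3\|\pi^k-\pi^*\|_1\le\frac32\pi^*_{\min}$ in the local region.

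The genuine gap is the bootstrap, and it fails in two ways.

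First, the dual oscillation bound you invoke is false when $\SSS\neq\EEE$. Since $\pi^k\to0$ on cross-component edges while staying bounded below on $\SSS$, the duals must diverge across components. For example, with $m=n=2$, $\EEE=\{(1,1),(1,2),(2,2)\}$ and $\mu=\nu$ uniform, one has $f^k_1-f^k_2\to-\infty$. You also give no argument turning a rate on $E_k$ into a quantitative bound on $\|\pi^k-\pi^*\|$. What certifies localness in the paper is the joint relative entropy bound of \cite{wang2026almost} (\autoref{thm:mainres:wan26}). It is combined with Pinsker, $\|\pi^k-\pi^*\|_1\le\sqrt{2V_k/\tau}$, and with $\log(1/\pi^*_{\min})\le D=O(1+\osc_\EEE(C)/\tau)$ from \autoref{lm:mainres:estim_pi*min}.

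Second, the burn-in certified this way is $k_0\approx e^{2D}$ times a polynomial in $B$ and $D$. That is exponential in $\osc_\EEE(C)/\tau$, so it cannot serve as your $B_1$, which the statement requires to depend only on $\mu,\nu,\EEE$. The missing idea is not to wait for the local regime:
\begin{itemize}
\item For $e\ell^2+3\le k\le 2k_0$, use the global bound $V_k/\tau\le\frac{2}{k-1}\bigl[B+\ell\log\frac{k-1}{\ell^2}\bigr]^2$. Its logarithm is at most $\log(2k_0)=O(1+D+\log B)$, which is linear in $\osc_\EEE(C)/\tau$.
\item For $k\ge 2k_0$, use the local bound $V_k/\tau\le 2A^2/(k-k_0)\le 4A^2/k$.
\end{itemize}
Together these give $V_k/\tau\lesssim(1+\osc_\EEE(C)/\tau)^2/k$ for all $k\ge e\ell^2+3$. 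The doubling trick then yields the theorem with $B_1=2e\ell^2+6$, as in \autoref{thm:mainres:glob}.
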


Although our result subsumes that of \cite{wang2026almost}, let us emphasize that our result builds on top of theirs.
Indeed, our analysis in this paper is local in nature:
our main technical result is an inequality of the form
$E_k \leq \frac{A}{k}$ valid only for all $k$ such that $\Hdiv{\pi^*}{\pi^k} \leq B$, for certain constants $A$ and $B$.
To deduce a computational complexity bound, this must still be combined with an upper estimate on $k_0$, the first iteration such that $\Hdiv{\pi^*}{\pi^k} \leq B$.
Such an estimate is obtained precisely thanks to the result of \cite{wang2026almost}.%
% \footnote{A bound on $k_0$ could also be obtained using the earlier work of \cite{leger2021gradient}, but this would result in a worse dependency on the problem parameters.}  ---> not even, actually
\footnote{We also stress that a bound on $k_0$ could not have been obtained easily using earlier works: prior to \cite{wang2026almost}, convergence bounds for the Sinkhorn algorithm in the asymptotically scalable case were established only for the marginal errors $E_k$ \cite{leger2021gradient}, whereas here we needed a bound on the \emph{joint} relative entropy.
% (The bound on joint relative entropy shown in \cite[Proposition~7]{aubin2022mirror} is only applicable in the finite-cost case.)
}

% We also prove an extension of the above theorem to the case where \eqref{eq:intro:EOT} has no feasible solution.
% Its statement is delayed to \autoref{sec:nonscal}, as it requires introducing a certain amount of background and notations.
% In a nutshell, it is known that in this case, $\pi^{2k}$ and $\pi^{2k+1}$ converge respectively to some $\pi^*_{\mathrm{even}}$ and $\pi^*_{\mathrm{odd}}$ as $k \to \infty$ \cite{aas2014limit,baradat2024convergence}, and we prove that the convergence occurs at the sharp rate of $O(1/k)$.

\vspace{1em}
The paper is organized as follows.
In \autoref{sec:prelim}, we recall a number of auxiliary facts concerning the structure of the EOT problem and the Sinkhorn algorithm.
In \autoref{sec:mainres}, we state and prove (a more precise version of) our main result.
% In \autoref{sec:nonscal}, we state and prove an extension to the case where \eqref{eq:intro:EOT} has no feasible solution.
We conclude in \autoref{sec:ccl} with discussions and possible directions for future work.

\section{Preliminaries} \label{sec:prelim}

Throughout this paper, we are interested in the optimization problem defined in \eqref{eq:intro:EOT}.
Recall that we assume it admits a feasible solution and hence a unique optimal solution $\pi^*$, whose support is denoted by $\SSS = \left\{ (i,j);~ \pi^*_{ij} > 0 \right\} \subset \EEE$.

\subsection{Dual Sinkhorn iterates} \label{subsec:prelim:dual}

The convex dual of the EOT problem \eqref{eq:intro:EOT} is
% $\max_{f,g}~ (-\Phi(f,g))$ where
% \begin{equation}
%     \forall (f,g) \in \RR^m \times \RR^n,~~
%     \Phi(f, g) =
%     \tau \log \sum_{ij} e^{\left[ -C_{ij} + f_i + g_j \right]/\tau} \mu_i \nu_j
%     - \mu^\top f - \nu^\top g
% \end{equation}
% with the convention that $\exp(-\infty) = 0$, i.e., the sum can equivalently be taken over the $(i,j) \in \EEE$.
% Also consider the alternative dual formulation, which is obtained by ignoring the normalizing constraint $\sum_{ij} \pi_{ij}=1$ when dualizing:
$\max_{f,g}~ (-\Psi(f,g))$ where
\begin{equation}
    \forall (f,g) \in \RR^m \times \RR^n,~~
    \Psi(f, g) =
    \tau \sum_{ij} e^{\left[ -C_{ij} + f_i + g_j \right]/\tau} \mu_i \nu_j - \tau - \mu^\top f - \nu^\top g,
\end{equation}
with the convention that $\exp(-\infty) = 0$, i.e., the sum can equivalently be taken over the $(i,j) \in \EEE$.
The dual Sinkhorn iterates are the $(f^k, g^k) \in \RR^m \times \RR^n$ for $k \geq 0$ defined by $(f^0, g^0) = (0,0)$ and
\begin{align}
    &\text{for $k$ even,}~~~
    f^{k+1} = f[g^k]
    \quad \text{and} \quad
    g^{k+1} = g^k \\
    &\text{for $k$ odd,}~~~~
    f^{k+1} = f^k
    \quad\quad \text{and} \quad
    g^{k+1} = g[f^k]
\end{align}
where
\begin{equation}
    \forall g \in \RR^n,~
    f[g]_i = -\tau \log \sum_j\, e^{\left[ -C_{ij} + g_j \right]/\tau} \nu_j
    \qquad \text{and} \qquad
    \forall f \in \RR^m,~
    g[f]_j = -\tau \log \sum_i\, e^{\left[ -C_{ij} + f_i \right]/\tau} \mu_i,
\end{equation}
still with the convention that $\exp(-\infty) = 0$.
Note that 
$\forall g, f[g] = \argmin \Psi(\cdot, g)$
and
$\forall f, g[f] = \argmin \Psi(f, \cdot)$.
% $\forall g, \argmin \Phi(\cdot, g) = \{ f[g] + b \bmone_m, b \in \RR \}$
% and
% $\forall f, \argmin \Phi(f, \cdot) = \{ g[f] + b \bmone_n, b \in \RR \}$.

\pagebreak

For any $(f, g) \in \RR^m \times \RR^n$, denote
\begin{equation}
    \pi[f,g]
    = \left(
        \frac{1}{Z(f,g)} e^{\left[ -C_{ij} + f_i + g_j \right]/\tau} \mu_i \nu_j
    \right)_{ij}
    \in \Delta_\EEE
    \quad\text{where}\quad
    Z(f,g) = \sum_{i'j'} e^{\left[ -C_{i'j'} + f_{i'} + g_{j'} \right]/\tau} \mu_{i'} \nu_{j'}.
\end{equation}
Then one can check that $\pi[f^k, g^k] = \pi^k$ for all $k \geq 0$.
Moreover, one has $Z(f^k, g^k) = 1$ for all $k \geq 1$, so that $\pi^k_{ij} = e^{[-C_{ij} + f^k_i + g^k_j]/\tau} \mu_i \nu_j$ for all $k \geq 1$.

% A number of previous works analyzing the convergence of the Sinkhorn algorithm, including \cite{wang2026almost}, are based primarily on this dual point of view. While its allows for an easier translation of classical intuitions from optimization---e.g., from the study of \cite{beck2013convergence} on alternating minimization---this point of view proves slightly awkward in the case where $\SSS \neq \EEE$.
While this dual point of view allows for an easier translation of classical intuitions from optimization, it proves slightly awkward in the case where $\SSS \neq \EEE$.
Indeed, in this case, $\Psi$ does not attain its infimum, because $\pi^*$ cannot be of the form $\pi^* = \pi[f,g]$ for any finite vectors $f,g$.
The converse is also true: if $\SSS = \EEE$ then there exists $(f^*, g^*)$ such that $\pi^* = \pi[f^*,g^*]$ and $\Psi(f^*, g^*) = \inf \Psi$.

\subsection{Optimality metrics}

As mentioned in the introduction, the primary optimality metric we use to measure the eventual convergence of the Sinkhorn algorithm is $E_k$, the $\ell_1$-norm error of the marginals.
The following alternative metrics will also play an important role in our analysis:
\begin{align}
    G_k^2 &= \Hdiv{\mu}{X_\sharp \pi^k} + \Hdiv{\nu}{Y_\sharp \pi^k},
    &
    V_k &= \tau \Hdiv{\pi^*}{\pi^k},
\end{align}
where
$\Hdiv{\pi}{\pi'} = \sum_I \pi_I \log \frac{\pi_I}{\pi'_I}$
is the relative entropy between any discrete probability distributions.
% \TODO{actually do we really need $G_k^2$?}  ---> not reeaaally... but still easier to leave it, both for me and for the reader IMO.

Let us recall some basic facts about the relation between these three metrics and the dual objective $\Psi$.

\begin{lemma} \label{lm:prelim:Ek_Gk}
    For any $k \geq 1$, if $k$ is odd then $X_\sharp \pi^k = \mu$, and if $k$ is even then $Y_\sharp \pi^k = \nu$.
    In particular,
    $E_k = \norm{X_\sharp \pi^k - \mu}_1 \vee \norm{Y_\sharp \pi^k - \nu}_1$
    and
    $G_k^2 = \Hdiv{\mu}{X_\sharp \pi^k} \vee \Hdiv{\nu}{Y_\sharp \pi^k}$,
    and moreover
    \begin{equation}
        \frac12 E_k^2 \leq G_k^2.
    \end{equation}
\end{lemma}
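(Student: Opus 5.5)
The proof is a direct computation from the update rule \eqref{eq:intro:primal_upd}, followed by Pinsker's inequality.

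\textbf{Step 1: the marginal that is exactly matched.} Let $k \geq 1$ be odd. Then $k-1$ is even, so $\pi^k_{ij} = \frac{\mu_i}{(X_\sharp \pi^{k-1})_i}\, \pi^{k-1}_{ij}$ for all $(i,j) \in \EEE$. Summing over $j$ with $(i,j) \in \EEE$ gives
\begin{equation}
    (X_\sharp \pi^k)_i = \frac{\mu_i}{(X_\sharp \pi^{k-1})_i}\, (X_\sharp \pi^{k-1})_i = \mu_i .
\end{equation}
The division is legitimate because $(X_\sharp \pi^{k-1})_i > 0$. Indeed, every row $i$ has some edge in $\EEE$ (no isolated vertex), and every entry of $\pi^0$ on $\EEE$ is positive. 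The updates multiply entries by positive factors, so positivity on $\EEE$ is preserved for all $k$. The case $k$ even is symmetric: the update is a column rescaling, so $Y_\sharp \pi^k = \nu$.

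\textbf{Step 2: the sums become maxima.} For odd $k$, the $X$-terms vanish: $\norm{X_\sharp \pi^k - \mu}_1 = 0$, and $\Hdiv{\mu}{X_\sharp \pi^k} = \Hdiv{\mu}{\mu} = 0$. All quantities involved are nonnegative, since the relative entropy between probability vectors is nonnegative. Hence each sum $a + b$ with $a = 0$ equals $a \vee b$. This gives the stated expressions for $E_k$ and $G_k^2$. The even case is identical with the roles of $X$ and $Y$ exchanged.

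\textbf{Step 3: the inequality.} Suppose, say, $k$ is odd. Then $E_k = \norm{Y_\sharp \pi^k - \nu}_1$ and $G_k^2 = \Hdiv{\nu}{Y_\sharp \pi^k}$. Both $\nu$ and $Y_\sharp \pi^k$ lie in $\Delta_n$, so Pinsker's inequality $\Hdiv{p}{q} \geq \frac12 \norm{p-q}_1^2$ applies and gives $\frac12 E_k^2 \leq G_k^2$ directly. The even case is the same with $\mu$ and $X_\sharp \pi^k$.

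None of these steps is a genuine obstacle. The one point to check is the positivity of the marginals in Step 1, which relies on the no-isolated-vertex assumption and on the strictly positive initialization.
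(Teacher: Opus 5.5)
Your proposal is correct and follows the same route as the paper: you read the exact marginal off the update rule \eqref{eq:intro:primal_upd}, observe that the vanishing term turns each sum into a maximum, and conclude with Pinsker's inequality. Your check that the marginals stay positive (using the no-isolated-vertex assumption and the positive initialization) is a detail the paper leaves implicit.
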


\begin{proof}
    The first part of the lemma follows directly from the definition of the primal Sinkhorn update \eqref{eq:intro:primal_upd}.
    The second part then follows from Pinsker's inequality: for any $k \geq 1$,
    $\frac12 E_k^2 = \frac12 \left[ \norm{X_\sharp \pi^k - \mu}_1^2 \vee \norm{Y_\sharp \pi^k - \nu}_1^2 \right]
    \leq \Hdiv{\mu}{X_\sharp \pi^k} \vee \Hdiv{\nu}{Y_\sharp \pi^k} = G_k^2$.
\end{proof}

\begin{lemma} \label{lm:prelim:Vk_Psi}
    For any $(f,g) \in \RR^m \times \RR^n$ such that $Z(f,g) = 1$,
    $\tau \Hdiv{\pi^*}{\pi[f,g]} = \Psi(f,g) - \inf \Psi$.
    In particular for any $k \geq 1$,
    $V_k = \Psi(f^k, g^k) - \inf \Psi$.
\end{lemma}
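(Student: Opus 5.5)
The proof will compute $\Psi(f,g) - \inf \Psi$ directly. The infimum will be expressed through $\pi^*$ by approximating $\pi^*$ with distributions of the form $\pi[f,g]$, which is needed because the infimum is not attained when $\SSS \neq \EEE$.

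\textbf{Step 1: an identity for $\Psi(f,g)$.} Fix $(f,g)$ with $Z(f,g)=1$, so that $\pi := \pi[f,g]$ satisfies $\pi_{ij} = e^{[-C_{ij}+f_i+g_j]/\tau}\mu_i\nu_j$ on $\EEE$. The first term of $\Psi$ then equals $\tau Z(f,g) = \tau$, and hence $\Psi(f,g) = -\mu^\top f - \nu^\top g$. Since $\pi^*$ has marginals $\mu$ and $\nu$ and is supported in $\SSS \subset \EEE$,
\begin{equation}
    \mu^\top f + \nu^\top g = \sum_{(i,j)\in\SSS} \pi^*_{ij}(f_i+g_j) = \sum_{(i,j)\in\SSS} \pi^*_{ij}\Big(C_{ij} + \tau\log\frac{\pi_{ij}}{\mu_i\nu_j}\Big).
\end{equation}
Writing $\log\frac{\pi_{ij}}{\mu_i\nu_j} = \log\frac{\pi^*_{ij}}{\mu_i\nu_j} - \log\frac{\pi^*_{ij}}{\pi_{ij}}$, this becomes
\begin{equation}
    \Psi(f,g) = -\Big[\textstyle\sum C_{ij}\pi^*_{ij} + \tau\Hdiv{\pi^*}{\mu\otimes\nu}\Big] + \tau\Hdiv{\pi^*}{\pi[f,g]} = -\mathrm{OPT} + \tau\Hdiv{\pi^*}{\pi[f,g]},
\end{equation}
where $\mathrm{OPT}$ denotes the optimal value of \eqref{eq:intro:EOT}.

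\textbf{Step 2: $\inf\Psi = -\mathrm{OPT}$.} For the lower bound, the identity gives $\Psi \geq -\mathrm{OPT}$ on $\{Z=1\}$. For a general $(f,g)$, shifting $f$ by the constant $c = -\tau\log Z(f,g)$ yields $Z=1$. A direct computation gives
\begin{equation}
    \Psi(f+c\bmone,g) - \Psi(f,g) = \tau - \tau Z - c = \tau(1 - Z + \log Z) \leq 0,
\end{equation}
so the infimum may be taken over $\{Z=1\}$. Thus $\inf\Psi \geq -\mathrm{OPT}$.

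For the upper bound, it suffices to show $\inf_{Z=1}\Hdiv{\pi^*}{\pi[f,g]} = 0$. This is the main obstacle, since when $\SSS\neq\EEE$ no finite $(f,g)$ achieves $\pi[f,g] = \pi^*$. The cleanest route is to use the Sinkhorn iterates themselves. Since $\pi^k \to \pi^*$ (Sinkhorn's theorem, recalled in the introduction) and $\SSS \subset \EEE$ with $\pi^k > 0$ on $\EEE$, we get $\Hdiv{\pi^*}{\pi^k} = \sum_{\SSS}\pi^*_{ij}\log(\pi^*_{ij}/\pi^k_{ij}) \to 0$ by continuity on the finite set $\SSS$, where every $\pi^*_{ij} > 0$. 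Combined with $Z(f^k,g^k)=1$ and Step 1, this gives $\Psi(f^k,g^k) \to -\mathrm{OPT}$, hence $\inf\Psi \leq -\mathrm{OPT}$.

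An alternative that avoids invoking convergence is standard weak duality together with a perturbation argument along the faces of the transportation polytope, but the route through the iterates is simpler.

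\textbf{Step 3: conclusion.} Putting Steps 1 and 2 together gives $\tau\Hdiv{\pi^*}{\pi[f,g]} = \Psi(f,g)-\inf\Psi$ whenever $Z(f,g)=1$. The statement for $V_k$ follows from $\pi[f^k,g^k] = \pi^k$ and $Z(f^k,g^k)=1$ for all $k\geq1$.
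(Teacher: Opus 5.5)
Your proof is correct. Your Step 1 is exactly the computation in the paper's proof; the two proofs differ only in how they justify $\min\eqref{eq:intro:EOT} = -\inf\Psi$.

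The paper takes this identity for granted, citing optimality of $\pi^*$ and the fact that $\max_{f,g}(-\Psi)$ is the convex dual of \eqref{eq:intro:EOT}, i.e.\ a zero duality gap. You prove it instead, in two halves:
\begin{itemize}
\item Weak duality: shifting every coordinate of $f$ by $-\tau\log Z(f,g)$ does not increase $\Psi$, since $\log Z \leq Z-1$. This reduces to $Z=1$, where relative entropy is nonnegative.
\item No gap: you evaluate the Step~1 identity along the Sinkhorn iterates and use $\pi^k \to \pi^*$ together with $\pi^*_{ij}>0$ on $\SSS$.
\end{itemize}

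Your extra work addresses a real subtlety when $\SSS \neq \EEE$. In that case every feasible plan vanishes on $\EEE\setminus\SSS$, so none lies in the relative interior of $\Delta_\EEE$. The usual Slater-type constraint qualification therefore fails, and the dual infimum is not attained. The zero gap still holds, for instance by compactness of the primal feasible set, but it needs some argument, and you give one.

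The price is that you rely on the qualitative convergence theorem for Sinkhorn's algorithm, which is a deeper result than the duality statement itself. There is no circularity, though: the paper cites that convergence independently and uses it elsewhere, and it is not derived from this lemma.
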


% \begin{proof}[Proof sketch]
%     Here we only show the lemma under the additional assumption that $\SSS = \EEE$, i.e., that $\Psi$ attains its infimum and $\pi^*$ is of the form $\pi[f^*, g^*]$ for some finite vectors $f^* \in \RR^m, g^* \in \RR^n$. The proof for the general case proceeds by an approximation argument, delayed to \autoref{sec:apx_delayedpfs}.
% 
%     For any $f,g$ such that $Z(f,g) = 1$, by definition of $\pi[f,g]_{ij} = e^{[-C_{ij} + f_i + g_j]/\tau} \mu_i \nu_j$,
%     \begin{multline*}
%         \tau \Hdiv{\pi^*}{\pi[f,g]}
%         = \sum_{ij} \pi^*_{ij} \left( f^*_i+g^*_j - f_i-g_j \right)
%         = (X_\sharp \pi^*)^\top (f^*-f)
%         + (Y_\sharp \pi^*)^\top (g^*-g) \\
%         = \mu^\top (f^* - f) + \nu^\top (g^* - g)
%         = \Psi(f, g) - \Psi(f^*, g^*),
%     \end{multline*}
%     as announced.
% \end{proof}

\begin{proof}
    Fix any $f,g$ such that $Z(f,g) = 1$ and let for concision $\pi = \pi[f,g] = \left( e^{[-C_{ij} + f_i + g_j]/\tau} \mu_i \nu_j \right)_{ij}$.
    Then $\Psi(f,g) = 0 - f^\top \mu - g^\top \nu$ and
    \begin{align*}
        \forall (i,j) \in \EEE,~~~
        \log \frac{\pi^*_{ij}}{\pi_{ij}}
        &= \log \frac{\pi^*_{ij}}{\mu_i \nu_j}
        - \log \frac{\pi_{ij}}{\mu_i \nu_j}
        = \log \frac{\pi^*_{ij}}{\mu_i \nu_j}
        + \frac{C_{ij} - f_i - g_j}{\tau} \\
        \tau \Hdiv{\pi^*}{\pi[f,g]}
        &= \tau \sum_{ij} \pi^*_{ij} 
        \log \frac{\pi^*_{ij}}{\pi_{ij}}
        = ~\underbrace{
            \tau \Hdiv{\pi^*}{\mu \otimes \nu}
            + \sum_{ij} C_{ij} \pi^*_{ij}
        }_{=\, \min \eqref{eq:intro:EOT} \,=\, -\inf \Psi}~
        -\, f^\top \underbrace{(X_\sharp \pi^*)}_{\mu}~
        -\, g^\top \underbrace{(Y_\sharp \pi^*)}_{\nu} \\
        &= -\inf \Psi + \Psi(f,g)
    \end{align*}
    by optimality of $\pi^*$ and by definition of $\max_{f,g}~ (-\Psi(f,g))$ as the dual problem of \eqref{eq:intro:EOT}.
\end{proof}

\begin{lemma}[{\cite[Lemma~2]{altschuler2017near}}] \label{lm:prelim:Vk+1_Vk_Gk2}
    For any $k \geq 1$,
    $V_{k+1} - V_k = -\tau G_k^2$.
\end{lemma}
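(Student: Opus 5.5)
By \autoref{lm:prelim:Vk_Psi}, $V_k = \Psi(f^k,g^k) - \inf\Psi$ for every $k\geq 1$, since $Z(f^k,g^k)=1$. The difference $V_{k+1}-V_k$ is therefore the decrease of $\Psi$ over one dual Sinkhorn step, and I will compute it directly from the explicit formulas. By symmetry it suffices to treat $k$ even, so that the step updates $f$: $f^{k+1}=f[g^k]$ and $g^{k+1}=g^k$. The odd case is identical with the roles of $(f,\mu,X)$ and $(g,\nu,Y)$ exchanged.

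\textbf{Step 1: the exponential term.} I claim that at both $(f^k,g^k)$ and $(f^{k+1},g^{k+1})$ the sum $\sum_{ij} e^{[-C_{ij}+f_i+g_j]/\tau}\mu_i\nu_j$ equals $Z=1$. This holds for $k\geq1$ by the normalization fact recalled in \autoref{subsec:prelim:dual}. Hence
\[
V_{k+1}-V_k=-\mu^\top(f^{k+1}-f^k)-\nu^\top(g^{k+1}-g^k)=-\mu^\top(f^{k+1}-f^k),
\]
because the $g$-block does not move.

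\textbf{Step 2: identifying the increment of $f$.} I will express $f^{k+1}_i-f^k_i$ through the row marginal of $\pi^k$. Write $a_i=(X_\sharp\pi^k)_i=\sum_j e^{[-C_{ij}+f^k_i+g^k_j]/\tau}\mu_i\nu_j$, so that
\[
\sum_j e^{[-C_{ij}+g^k_j]/\tau}\nu_j = \frac{a_i}{\mu_i}\,e^{-f^k_i/\tau}.
\]
Taking $-\tau\log$ of both sides gives
\[
f^{k+1}_i = f^k_i - \tau\log\frac{a_i}{\mu_i} = f^k_i+\tau\log\frac{\mu_i}{a_i}.
\]
This is consistent with the primal update \eqref{eq:intro:primal_upd}, which multiplies row $i$ by $\mu_i/a_i$. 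Substituting into Step 1,
\[
V_{k+1}-V_k=-\tau\sum_i\mu_i\log\frac{\mu_i}{(X_\sharp\pi^k)_i}=-\tau\Hdiv{\mu}{X_\sharp\pi^k}.
\]

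\textbf{Step 3: matching with $G_k^2$.} For even $k\geq 2$, \autoref{lm:prelim:Ek_Gk} gives $Y_\sharp\pi^k=\nu$, so $\Hdiv{\nu}{Y_\sharp\pi^k}=0$ and therefore $G_k^2=\Hdiv{\mu}{X_\sharp\pi^k}$, which completes the even case. For odd $k$ the same computation gives $-\tau\Hdiv{\nu}{Y_\sharp\pi^k}$, while $X_\sharp\pi^k=\mu$ makes the other term of $G_k^2$ vanish.

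\textbf{Main obstacle.} The argument is essentially bookkeeping. The only points needing care are that the normalization $Z=1$ holds at both iterates, which is why the statement requires $k\geq1$, and that the $\exp(-\infty)=0$ convention causes no trouble. It does not: since no vertex of the graph is isolated, every $a_i>0$, so all logarithms are finite.
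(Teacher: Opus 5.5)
Your proof is correct and is essentially the argument of the cited source \cite[Lemma~2]{altschuler2017near}, which the paper invokes without reproducing. That argument writes $V_k$ through $\Psi$ using $Z(f^k,g^k)=Z(f^{k+1},g^{k+1})=1$, notes that the dual update shifts $f_i$ by $\tau\log\big(\mu_i/(X_\sharp\pi^k)_i\big)$, and uses that the other marginal is already fitted for $k\geq1$. One nuance on your closing remark: the identity $V_{k+1}-V_k=-\tau\Hdiv{\mu}{X_\sharp\pi^k}$ also holds at $k=0$, by the direct primal computation $\tau\sum_{ij}\pi^*_{ij}\log(\pi^k_{ij}/\pi^{k+1}_{ij})$, so what genuinely forces $k\geq1$ is your Step~3, since $\pi^0$ need not fit either marginal.
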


\begin{remark}
    If $(x^k)_k$ are the iterates of gradient descent on an objective function $f: \RR^d \to \RR$, i.e., $x^{k+1} = x^k - \eta \nabla f(x^k)$ for a step-size $\eta>0$, then we have
    $f(x^{k+1}) - f(x^k) = -\eta \norm{f(x^k)}^2 + O(\eta^2)$.
    So in view of \autoref{lm:prelim:Vk_Psi} and \autoref{lm:prelim:Vk+1_Vk_Gk2},
    $G_k^2$ acts as an analog of the squared gradient norm for gradient descent.
    % , hence the choice of notation.
\end{remark}

% \begin{lemma}[{\cite[Proposition~6.10]{nutz2021introduction}}]
%     The sequences $(G_{2k})_k$ and $(G_{2k+1})_k$ are non-increasing.
%     \TODO{remove if not used}
% \end{lemma}

\begin{lemma}[``doubling trick''] \label{lm:prelim:Ek2_Vk/2}
    For any $k \geq 2$,
    $E_k^2 \leq \frac{8}{\tau k} V_{\ceil{k/2}}$.
\end{lemma}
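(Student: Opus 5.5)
We combine the monotonicity structure from \autoref{lm:prelim:Vk+1_Vk_Gk2} with the comparison $\frac12 E_k^2 \le G_k^2$ of \autoref{lm:prelim:Ek_Gk}. The key observation is that $(G_k^2)$ need not be monotone along all iterates, so we cannot directly bound $G_k^2$ by an average. However, the marginal error can only decrease after a Sinkhorn step in the appropriate sense, and this is what makes the averaging work.

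First, telescoping \autoref{lm:prelim:Vk+1_Vk_Gk2} from $\ell = \ceil{k/2}$ to $k-1$ gives
\begin{equation}
    \tau \sum_{\ell=\ceil{k/2}}^{k-1} G_\ell^2 = V_{\ceil{k/2}} - V_k \le V_{\ceil{k/2}},
\end{equation}
using $V_k \ge 0$. There are $k - \ceil{k/2} = \lfloor k/2 \rfloor \ge k/4$ terms, say, for $k \ge 2$. Hence
\begin{equation}
    \min_{\ceil{k/2} \le \ell \le k-1} G_\ell^2 \le \frac{V_{\ceil{k/2}}}{\tau \lfloor k/2\rfloor}.
\end{equation}

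Second, we need the monotonicity fact that $E_\ell$ is non-increasing in $\ell$ for $\ell \ge 1$. This is the main step to justify. Suppose $k$ is odd, so that $X_\sharp\pi^{k+1}$ is obtained from $\pi^{k}$ by rescaling rows; wait, here the step from $\pi^k$ to $\pi^{k+1}$ rescales columns instead. Then $Y_\sharp \pi^{k+1} = \nu$, and
\begin{equation}
    \norm{X_\sharp \pi^{k+1} - \mu}_1 = \sum_i \Big|\sum_j \pi^k_{ij}\big(\tfrac{\nu_j}{(Y_\sharp\pi^k)_j} - 1\big)\Big| \le \sum_j |\nu_j - (Y_\sharp \pi^k)_j| = \norm{Y_\sharp\pi^k - \nu}_1,
\end{equation}
using $X_\sharp\pi^k = \mu$. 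The even case is symmetric. So $E_{k+1} \le E_k$ for $k\ge1$, which is the standard $\ell_1$ contraction of a Sinkhorn half-step.

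Third, we conclude. For every $\ell \in [\ceil{k/2}, k-1]$ we have $\ell \ge 1$, so $E_k^2 \le E_\ell^2 \le 2 G_\ell^2$. Taking the $\ell$ achieving the minimum gives
\begin{equation}
    E_k^2 \le \frac{2 V_{\ceil{k/2}}}{\tau \lfloor k/2 \rfloor} \le \frac{8 V_{\ceil{k/2}}}{\tau k},
\end{equation}
since $\lfloor k/2 \rfloor \ge k/4$ for $k \ge 2$. (In fact $\lfloor k/2\rfloor \ge k/3$ for $k \ge 2$, which leaves room.)

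The only potential obstacle is an edge case at $k=2$: there $\ceil{k/2}=1$ and $\ell$ ranges over $\{1\}$, which is fine because \autoref{lm:prelim:Vk+1_Vk_Gk2} and \autoref{lm:prelim:Ek_Gk} both hold for $\ell \ge 1$. The contraction argument is the main point to verify carefully.
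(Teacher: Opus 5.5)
Your proof is correct. The paper gives no argument of its own for this lemma and defers to \cite[Proposition~4.3]{ghosal2025convergence} and \cite[Lemma~2.5]{wang2026almost}. Your proof has the same doubling-trick skeleton: telescope \autoref{lm:prelim:Vk+1_Vk_Gk2} over the second half of the iterations, then combine a monotonicity property with Pinsker. Where you differ is in the monotone quantity.

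The standard form of the trick, which is probably what the constant $8$ reflects, works with the marginal relative entropies. These are non-increasing along iterates of the same parity: $G_{k+2}^2 \le G_k^2$ for $k \ge 1$. This follows from two applications of data processing that pass through the reverse entropy of the intermediate marginal. For $k$ even, first $\Hdiv{Y_\sharp \pi^{k+1}}{\nu} \le \Hdiv{\pi^{k+1}}{\pi^k} = G_k^2$, then $G_{k+2}^2 \le \Hdiv{\pi^{k+1}}{\pi^{k+2}} = \Hdiv{Y_\sharp \pi^{k+1}}{\nu}$; the odd case is symmetric. Summing only over the at least $k/4$ indices $\ell \in [\ceil{k/2}, k]$ with $\ell \equiv k \pmod 2$ gives the intermediate bound $G_k^2 \le \frac{4}{\tau k} V_{\ceil{k/2}}$. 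A single application of Pinsker then yields exactly the constant $8$.

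Your one-step $\ell_1$ contraction $E_{k+1} \le E_k$ is more elementary. It needs only one triangle inequality, valid because $(Y_\sharp \pi^k)_j > 0$ by positivity of $\pi^k$ on $\EEE$ and the absence of isolated vertices. It also lets all $\lfloor k/2 \rfloor$ terms of the window count, so it even gives the constant $6$. What it does not give is a bound on the marginal relative entropy $G_k^2$ itself, which the entropy route provides before Pinsker is applied.
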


\begin{proof}
    See \cite[Proposition~4.3]{ghosal2025convergence} or \cite[Lemma~2.5]{wang2026almost}.
\end{proof}

Finally, let us remark the following useful decomposition of the joint forward relative entropy $V_k/\tau = \Hdiv{\pi^*}{\pi^k}$.

\begin{lemma} \label{lm:prelim:decomp_Vk}
    For any $\pi \in \Delta_\EEE$,
    \begin{equation}
        \Hdiv{\pi^*}{\pi}
        = \sum_{(i,j) \in \SSS} 
        D_h(\pi^*_{ij} \| \pi_{ij})
        + \sum_{(i,j) \in \EEE\setminus\SSS} \pi_{ij}
    \end{equation}
    where for all $p, q>0$,
    $D_h(p \| q) = p \log \frac{p}{q} - p + q$.
    Note that $D_h(p \| q) \geq 0$ with equality if and only if $p=q$.
\end{lemma}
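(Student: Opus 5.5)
The decomposition follows by expanding the definition of $\Hdiv{\pi^*}{\pi}$ over $\EEE$, adding and subtracting the total masses of $\pi^*$ and $\pi$, and sorting the edges according to whether they lie in $\SSS$. Fix $\pi \in \Delta_\EEE$. If some $\pi_{ij} = 0$ with $(i,j) \in \SSS$, both sides equal $+\infty$ under the usual conventions, so assume $\pi_{ij}>0$ on $\SSS$. Since $\pi^*_{ij} = 0$ for $(i,j) \in \EEE \setminus \SSS$, and $0 \log(0/q) = 0$, we have
\begin{equation}
    \Hdiv{\pi^*}{\pi} = \sum_{(i,j) \in \SSS} \pi^*_{ij} \log \frac{\pi^*_{ij}}{\pi_{ij}}.
\end{equation}

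Next we use that both $\pi^*$ and $\pi$ are probability distributions on $\EEE$. This gives $\sum_{\SSS} \pi^*_{ij} = 1$, since $\pi^*$ is supported on $\SSS$, and $\sum_{\EEE} \pi_{ij} = 1$. Hence
\begin{equation}
    0 = -\sum_{(i,j) \in \SSS} \pi^*_{ij} + \sum_{(i,j) \in \SSS} \pi_{ij} + \sum_{(i,j) \in \EEE\setminus\SSS} \pi_{ij}.
\end{equation}
Adding this identity to the previous display and grouping the terms indexed by $\SSS$ gives $\sum_{\SSS} \bigl( \pi^*_{ij}\log\frac{\pi^*_{ij}}{\pi_{ij}} - \pi^*_{ij} + \pi_{ij} \bigr) + \sum_{\EEE\setminus\SSS} \pi_{ij}$. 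The summand over $\SSS$ is exactly $D_h(\pi^*_{ij} \| \pi_{ij})$, which is the claimed formula.

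For the final remark, fix $q>0$ and set $\phi(p) = p\log(p/q) - p + q$. Then $\phi$ is strictly convex in $p$, with $\phi'(p) = \log(p/q)$, which vanishes exactly at $p = q$, and $\phi(q) = 0$. Therefore $D_h(p\|q) \geq 0$, with equality if and only if $p = q$. Equivalently, $D_h$ is the Bregman divergence of $h(p) = p\log p$, so nonnegativity is the usual convexity inequality.

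The only point needing care is the normalization step: it uses that $\pi^*$ puts zero mass outside $\SSS$ while $\pi$ may not. This asymmetry is what produces the linear term $\sum_{\EEE\setminus\SSS}\pi_{ij}$. Beyond that, the argument is pure bookkeeping.
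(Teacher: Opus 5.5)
Your proposal is correct and takes essentially the same approach as the paper, which simply states that the identity follows from direct computation: you restrict the relative entropy to $\SSS$, add the zero quantity $-\sum_{\SSS}\pi^*_{ij}+\sum_{\EEE}\pi_{ij}$ obtained from normalization, and regroup, with a valid strict-convexity argument for $D_h\ge 0$. Your remarks on the $\pi_{ij}=0$ convention and on why the linear term $\sum_{\EEE\setminus\SSS}\pi_{ij}$ appears are accurate.
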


\begin{proof}
    The lemma follows from direct computations.
\end{proof}

\subsection{Asymptotic and exact scalability}

In the matrix scaling literature, a matrix $A \in \RR_+^{m \times n}$ is called exactly $(\mu,\nu)$-scalable if there exist $(f^*, g^*) \in \RR^{m \times n}$ such that $A' = \big( A_{ij} e^{f^*_i + g^*_j} \big)_{ij}$ satisfies $A' \bmone_n = \mu$ and $A^{\prime \top} \bmone_m = \nu$.
% (where $\bmone_N$ is the vector in $\RR^N$ with all components equal to $1$).
Similarly, $A$ is called asymptotically $(\mu, \nu)$-scalable if there exists a sequence $(f^l, g^l)_l$ such that $A^l = \big( A_{ij} e^{f^l_i + g^l_j} \big)_{ij}$ converges and its limit $A'$ satisfies $A' \bmone_n = \mu$ and $A^{\prime \top} \bmone_m = \nu$; note that $A'$ may have a smaller support set than $A$.
As one could expect given the dual perspective of \autoref{subsec:prelim:dual}, 
solving \eqref{eq:intro:EOT} is precisely equivalent to finding such a matrix $A'$ for $A = \left( e^{-C_{ij}/\tau} \mu_i \nu_j \right)_{ij}$ \cite{idel2016review}.

In fact, it is known since at least the 1960's that exact/asymptotic $(\mu, \nu)$-scalability is a property of the support set or ``pattern'' of $A$, independent of its specific non-zero coefficients \cite[Theorems~4.1, 4.2]{idel2016review}.
This justifies the following definition, which is a bit more natural for our purposes.
Although not essential for this paper's derivations, we find that it helps to clarify ideas and to prevent possible confusion.
% , especially for \autoref{sec:nonscal}.

\begin{definition} \label{def:prelim:scalable_triplet}
    Consider any $\mu \in \Delta_m, \nu \in \Delta_n$ such that $\mu_{\min}, \nu_{\min}>0$
    and any $\EEE \subset \{1 \dots m\} \times \{1 \dots n\}$ such that the bipartite graph with edge set $\EEE$ has no isolated vertex.
    We say that
    \begin{itemize}
        \item $(\mu, \nu, \EEE)$ is \emph{asymptotically scalable} if there exists $\pi \in \Delta_\EEE$ such that $X_\sharp \pi = \mu$ and $Y_\sharp \pi = \nu$.
        In this case, we define the \emph{optimal support set} as the subset
        \begin{equation}
            \SSS(\mu, \nu, \EEE) 
            = \left\{
                (i,j) \in \EEE;~~~
                \exists \pi \in \Delta_\EEE 
                ~~\text{s.t.}~~
                X_\sharp \pi = \mu,~ 
                Y_\sharp \pi = \nu,~
                \pi_{ij} > 0
            \right\}.
        \end{equation}
        \item $(\mu, \nu, \EEE)$ is \emph{exactly scalable} if it is asymptotically scalable and
        $\SSS(\mu, \nu, \EEE) = \EEE$.
    \end{itemize}
\end{definition}

% In other words, we say that $(\mu, \nu, \EEE)$ is [asymptotically/exactly] scalable if $A$ is [asymptotically/exactly] $(\mu, \nu)$-scalable for all---equivalently, for some---$A$ with support set $\EEE$.

Coming back to the problem setup considered in this paper as introduced in \autoref{sec:intro}, our assumption that the EOT problem \eqref{eq:intro:EOT} admits feasible solutions is equivalent to assuming $(\mu, \nu, \EEE)$ asymptotically scalable. 
Moreover the set $\SSS = \left\{ (i,j) \in \EEE;~ \pi^*_{ij}>0 \right\}$ is precisely the optimal support set of $(\mu, \nu, \EEE)$, as the following lemma formalizes.

\begin{lemma} \label{lm:prelim:piopt_SSS}
    Consider an asymptotically scalable triplet $(\mu, \nu, \EEE)$.
    For any finite coefficients $(C_{ij})_{ij} \subset \RR^\EEE$ and $\tau>0$,
    the optimal solution $\pi^*$ of \eqref{eq:intro:EOT} satisfies
    $\left\{ (i,j) \in \EEE;~ \pi^*_{ij}>0 \right\} = \SSS(\mu, \nu, \EEE)$.
    
    In particular, for any asymptotically scalable triplet $(\mu, \nu, \EEE)$, there exists $\pi \in \Delta_{\EEE}$ such that $X_\sharp \pi = \mu$, $Y_\sharp \pi = \nu$, and $\forall (i,j) \in \SSS(\mu, \nu, \EEE),~ \pi_{ij}>0$.
\end{lemma}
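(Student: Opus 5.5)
The plan is to prove the ``in particular'' part first, by convexity, and then get the main claim by showing that $\pi^*$ cannot vanish anywhere a feasible plan is positive. Let $\Pi$ denote the set of feasible plans $\pi \in \Delta_\EEE$ with $X_\sharp \pi = \mu$ and $Y_\sharp \pi = \nu$. This set is convex and, by assumption, nonempty.

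\emph{Existence of a plan positive on $\SSS(\mu,\nu,\EEE)$.} For each $(i,j) \in \SSS(\mu,\nu,\EEE)$ pick some $\pi^{(ij)} \in \Pi$ with $\pi^{(ij)}_{ij} > 0$. Take the uniform average $\bar\pi$ of these finitely many plans. Then $\bar\pi \in \Pi$ by convexity, and $\bar\pi_{ij} > 0$ for every $(i,j) \in \SSS(\mu,\nu,\EEE)$, since all entries are nonnegative.

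\emph{The inclusion $\{\pi^*>0\} \subset \SSS(\mu,\nu,\EEE)$.} This is immediate, since $\pi^*$ itself belongs to $\Pi$.

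\emph{The reverse inclusion.} This is the main step, and I would argue by a first-variation computation. Suppose for contradiction that $\bar\pi_{ij}>0$ for some $(i,j)$ while $\pi^*_{ij}=0$. Consider $\pi_t = (1-t)\pi^* + t\bar\pi \in \Pi$ for $t\in[0,1]$, and let $F$ denote the objective of \eqref{eq:intro:EOT}. The linear part $\sum C_{ij}\pi_{ij}$ has bounded derivative in $t$, because the $C_{ij}$ are finite. Now write $h(x)=x\log x$, so that the entropy term is $\tau\sum_{ij}\bigl[h(\pi_{ij}) - \pi_{ij}\log(\mu_i\nu_j)\bigr]$. For entries with $\pi^*_{ij}>0$, the derivative at $t=0^+$ is finite. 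For an entry with $\pi^*_{ij}=0<\bar\pi_{ij}$, we have $h(t\bar\pi_{ij}) = t\bar\pi_{ij}\log(t\bar\pi_{ij})$, whose right derivative at $0$ is $-\infty$. Entries where both plans vanish contribute nothing. Hence $\frac{d}{dt}F(\pi_t)|_{t=0^+} = -\infty$, so $F(\pi_t)<F(\pi^*)$ for small $t>0$. This contradicts the optimality of $\pi^*$.

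To make the derivative argument rigorous, I would rely on the convexity of $t\mapsto F(\pi_t)$. The difference quotient $(F(\pi_t)-F(\pi^*))/t$ is then monotone, and it suffices to show it tends to $-\infty$. That follows from the explicit term $\tau\,\bar\pi_{ij}\log(t\bar\pi_{ij})$ together with bounded contributions from all other terms. Bounding those other terms on $[0,1/2]$ is routine: each entry is either a smooth function near a positive value or identically zero.

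Combining the two inclusions gives $\{(i,j);\ \pi^*_{ij}>0\} = \SSS(\mu,\nu,\EEE)$. The second assertion of the lemma then also follows directly, by taking $\pi=\pi^*$.
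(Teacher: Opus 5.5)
Your proof is correct and follows the paper's argument: the paper also moves from $\pi^*$ along the segment toward a feasible plan that is positive at the offending entry, and uses that the entropy term has right derivative $-\infty$ at $t=0$ to contradict optimality. The only difference is that you build a plan positive on all of $\SSS(\mu,\nu,\EEE)$ directly, by averaging witnesses over the convex feasible set. The paper instead obtains that second assertion from the first part, by taking $\pi = \pi^*$ for an arbitrary finite cost; both routes are fine.
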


\begin{proof}
    The inclusion $\left\{ (i,j) \in \EEE;~ \pi^*_{ij}>0 \right\} \subset \SSS(\mu, \nu, \EEE)$ holds by definition of $\SSS(\mu, \nu, \EEE)$.
    
    Conversely, suppose by contradiction that there exists $(i_0,j_0) \in \SSS(\mu, \nu, \EEE)$ such that $\pi^*_{i_0j_0} = 0$.
    By definition, there exists $\pi' \in \Delta_\EEE$ such that $X_\sharp \pi' = \mu$, $Y_\sharp \pi' = \nu$ and $\pi'_{i_0j_0}>0$.
    Denote $\pi_t = (1-t) \pi^* + t \pi'$ and remark that $\pi_t$ is feasible for \eqref{eq:intro:EOT} for all $0 \leq t \leq 1$.
    Then one can check that $h: t \mapsto \Hdiv{\pi_t}{\mu \otimes \nu}$
    satisfies $\lim_{t \to 0^+} h'(t) = -\infty$. So there exists $t \neq 0$ such that $\pi_t$ achieves a smaller objective value for \eqref{eq:intro:EOT} than $\pi^*$, contradicting the definition of $\pi^*$.

    The second part of the lemma follows immediately, as it is suffices to choose as $\pi$ the optimal solution of the EOT problem with marginals $\mu, \nu$ and any matrix $C$ with finite-entry pattern $\EEE$.
\end{proof}

\section{Main result} \label{sec:mainres}

In this section, we state and prove a more precise version of \autoref{thm:intro:mainres}.
To do so, we start by describing in detail the structure of the optimal support set $\SSS$ in \autoref{subsec:mainres:struct}.
Then we prove some key lemmas on the local convergence behavior of the Sinkhorn algorithm in \autoref{subsec:mainres:loc}.
Finally, we present the statement and proof of our main result in \autoref{subsec:mainres:glob}.

\subsection{Structure of the admissible and optimal support sets} \label{subsec:mainres:struct}

The following decomposition of the admissible support set $\EEE$ is due to \cite{allen2017much}.
Its connection to the Dulmage-Mendelsohn decomposition of bipartite graphs was remarked by \cite{hayashi2024finding}.
We use the same choice of notations and terminology as in \cite{wang2026almost}.
See \autoref{fig:mainres:DM_E} for an illustration.

\begin{proposition}[{\cite[Lemma~C.3]{allen2017much}}] \label{prop:mainres:DM_E}
    Suppose that $(\mu, \nu, \EEE)$ is asymptotically scalable.
    Then there exist an integer $P$ and partitions 
    $\{1 \dots m\} = I_1 \sqcup ... \sqcup I_P$,
    $\{1 \dots n\} = J_1 \sqcup ... \sqcup J_P$ such that
    \begin{itemize}
        \item For all $p \leq P$,
        $\sum_{i \in I_p} \mu_i = \sum_{j \in J_p} \nu_j$ and
        $(\restr{\mu}{I_p}, \restr{\nu}{J_p}, \EEE_p)$ is exactly scalable,%
        \footnote{Here $\restr{\mu}{I_p}, \restr{\nu}{J_p}$ denote the restrictions of the vectors $\mu, \nu$ to $I_p, J_p$ respectively. 
        The notations $X_\sharp$ and $Y_\sharp$ and \autoref{def:prelim:scalable_triplet} are extended to the case of unnormalized positive vectors in the natural way.}
        where
        \begin{equation}
            \EEE_p = \EEE \cap (I_p \times J_p).
        \end{equation}
        We call the subgraphs $(I_p \sqcup J_p, \EEE_p)$ of $(\{1 \dots m\} \sqcup \{1 \dots n\}, \EEE)$ the \emph{Dulmage-Mendelsohn (DM) components} of $(\mu, \nu, \EEE)$.
        \item Denoting by ``$\to$'' the relation on $\{1 \dots P\}$ defined by
        \begin{equation}
            p \to q
            ~~~\iff~~~
            p \neq q
            ~~\text{and}~~
            \EEE \cap (I_p \times J_q) \neq \varnothing,
        \end{equation}
        the directed graph $(\{1 \dots P\}, \{(p,q);~ p \to q\})$ is a directed acyclic graph (DAG).
        We call it the \emph{DM interaction DAG} of $(\mu, \nu, \EEE)$.
    \end{itemize}
\end{proposition}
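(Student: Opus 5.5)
We take the components to be the connected components of the optimal support graph and then verify the two bullet points. Let $\SSS = \SSS(\mu,\nu,\EEE)$. By \autoref{lm:prelim:piopt_SSS}, fix a feasible $\pi \in \Delta_\EEE$ with $X_\sharp\pi=\mu$, $Y_\sharp\pi=\nu$ and $\pi_{ij}>0$ for all $(i,j)\in\SSS$. Since $\pi$ has full marginals and $\mu_{\min},\nu_{\min}>0$, every vertex meets at least one edge of $\SSS$. So the graph $(\{1\dots m\}\sqcup\{1\dots n\},\SSS)$ has no isolated vertex. Let its connected components be $I_1\sqcup J_1,\dots,I_P\sqcup J_P$; each $I_p$ and $J_p$ is nonempty. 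Then $\pi$ is supported on $\bigcup_p I_p\times J_p$, so $\restr{\pi}{I_p\times J_p}$ has marginals $\restr{\mu}{I_p},\restr{\nu}{J_p}$. Summing gives $\sum_{I_p}\mu_i=\sum_{J_p}\nu_j$.

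For exact scalability of each block, let $\SSS_p=\SSS\cap(I_p\times J_p)$. Any feasible plan for the block $(\restr{\mu}{I_p},\restr{\nu}{J_p},\EEE_p)$ can be glued with $\restr{\pi}{I_q\times J_q}$ for $q\neq p$ to give a feasible plan for the full problem. Hence its support lies in $\SSS$, and so in $\SSS_p$. Conversely, $\restr{\pi}{I_p\times J_p}$ is positive on all of $\SSS_p$. So the optimal support of block $p$ is exactly $\SSS_p$. It remains to show $\SSS_p=\EEE_p$, i.e.\ no edge of $\EEE\setminus\SSS$ has both ends in the same component. Combining this with acyclicity, the core claim is:
\[
\text{the digraph on $\{1\dots P\}$ with arcs } p\to q \iff \EEE\cap(I_p\times J_q)\neq\varnothing,\ (i,j)\notin\SSS \text{ for some such } (i,j),
\]
including loops $p\to p$, has no directed cycle.

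The main step is a cycle-cancelling argument. Suppose there is a cycle of non-support edges $(i_1,j_1)\in I_{p_1}\times J_{p_2}$, $(i_2,j_2)\in I_{p_2}\times J_{p_3}$, \dots, $(i_r,j_r)\in I_{p_r}\times J_{p_1}$ (with $r=1$ covering the loop case). Inside component $p_{s+1}$, join $j_s$ to $i_{s+1}$ by a path in $\SSS$, which exists by connectivity. This closes an alternating cycle in the bipartite graph $\EEE$: the non-support edges $(i_s,j_s)$ followed by support paths. Alternately add $+\varepsilon$ and $-\varepsilon$ around this cycle, with $+\varepsilon$ on the non-support edges. The parity works out because each $\SSS$-path from a column vertex to a row vertex has odd length. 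This perturbation preserves both marginals. For small $\varepsilon$ it keeps $\pi$ nonnegative, since every $-\varepsilon$ lands on an $\SSS$ edge where $\pi>0$. If the closed walk repeats edges, we first extract a simple alternating cycle that still contains at least one non-support edge, or take the net signed sum of the walk. The result is a feasible plan with positive mass on $(i_1,j_1)\notin\SSS$, contradicting the definition of $\SSS$.

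Loops give $\EEE_p=\SSS_p$, hence exact scalability. Cycles of length $\ge2$ are excluded too, and every edge of $\EEE\cap(I_p\times J_q)$ with $p\ne q$ is automatically outside $\SSS$. So the relation ``$\to$'' of the statement is acyclic, which is the DAG property. The obstacle is the bookkeeping in this last step: the simple-cycle extraction and the sign parity.
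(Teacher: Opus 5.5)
Your argument is correct. The paper does not prove this proposition itself: it imports it from \cite[Lemma~C.3]{allen2017much} and points to the Dulmage--Mendelsohn viewpoint of \cite{hayashi2024finding}. So your proof is a self-contained alternative rather than a reconstruction.

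You build the blocks directly as the connected components of $(\{1 \dots m\} \sqcup \{1 \dots n\}, \SSS)$, starting from a feasible plan that is positive on all of $\SSS$ (\autoref{lm:prelim:piopt_SSS}). You then exclude every directed cycle of off-support edges, loops included, by a $\pm\varepsilon$ perturbation along an alternating closed walk. This is the weighted analogue of the matching-theory fact that an edge outside a perfect matching lies in some perfect matching iff it lies on an alternating cycle.

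The bookkeeping you flag is already settled by your second option. Off-support edges are only ever traversed from row to column, so the net signed sum puts a positive multiple of $\varepsilon$ on them. It puts only integer multiples of $\varepsilon$ on $\SSS$-edges, where $\pi>0$. No simple-cycle extraction is needed.

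What your route buys is that it uses nothing beyond the definition of $\SSS$, and it yields \autoref{prop:mainres:DM_S} at the same time. No $\SSS$-edge crosses components by construction, and the loop case gives $\EEE_p = \SSS_p$. The paper instead derives that proposition afterwards, by induction on depth in the DAG, for an arbitrary partition satisfying the present statement. The citation buys brevity and the identification with the Dulmage--Mendelsohn decomposition.

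One difference is worth noticing: your blocks are connected, so your partition can be strictly finer than the one the paper has in mind (the paper remarks that its $\EEE_p$ need not be connected). This is harmless because the statement is existential. Moreover, refining cannot lengthen paths in the interaction DAG. An edge of $\EEE$ between two of your components inside one coarse block would lie in $\EEE_p \subset \SSS$ by \autoref{prop:mainres:DM_S}, and would therefore join them.
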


\newcommand{\diagramscale}{0.8} % Common scale factor for all three TikZ pictures.
\begin{figure}[t]
    \centering
    \begin{subfigure}[c]{0.46\textwidth}
        \centering
        \scalebox{\diagramscale}{%
            \input{tikz_fig1a}%
        }
        \caption{An example of a bipartite graph 
        % $([m] \sqcup [n], \EEE)$ 
        with vertices arranged and grouped by DM component}
        \label{fig:mainres:DM_E_a}
    \end{subfigure}
    \hfill
    \begin{subfigure}[c]{0.31\textwidth}
        \centering
        \scalebox{\diagramscale}{
            \input{tikz_fig1b}%
        }
        \caption{A stylized version of \autoref{fig:mainres:DM_E_a}}
        \label{fig:mainres:DM_E_b}
    \end{subfigure}
    \hfill
    \begin{subfigure}[c]{0.17\textwidth}
        \centering
        \scalebox{\diagramscale}{\input{tikz_fig1c}}
        \caption{The DM interaction DAG}
        \label{fig:mainres:DM_E_c}
    \end{subfigure}
    \caption{An illustration of the DM decomposition \autoref{prop:mainres:DM_E}.
    In the case represented here, $p=3$ and the DM interaction DAG is the maximal one.
    In (a) and (b), instead of representing specific edges, for readability we used transparent bands to indicate the $I_p, J_q$ for which $\EEE \cap (I_p \times J_q) \neq \varnothing$; so for example 
    $\EEE$ does not contain any edge with one endpoint in $I_2$ and the other in $J_1$.
    }
    \label{fig:mainres:DM_E}
\end{figure}

The DM decomposition also allows to capture exactly the structure of the optimal support set $\SSS = \left\{ (i,j);~ \pi^*_{ij}>0 \right\} \subset \EEE$.
Namely, $\SSS$ consists precisely of the in-optimal-support edges of $\EEE$, as the next proposition shows.
We note that both \autoref{prop:mainres:DM_S} and \autoref{prop:mainres:DM_pi*} below appeared previously in \cite[Theorem~3.4]{hayashi2024finding}, but we provide complete proofs for convenience.

\begin{proposition} \label{prop:mainres:DM_S} 
    In the setting of \autoref{prop:mainres:DM_E}, we have
    \begin{equation}
        \SSS(\mu, \nu, \EEE) = \bigsqcup_{p \leq P} \EEE_p.
    \end{equation}
\end{proposition}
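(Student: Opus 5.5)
We prove the two inclusions separately. Throughout, write $\SSS = \SSS(\mu,\nu,\EEE)$.

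\emph{Inclusion $\bigsqcup_p \EEE_p \subset \SSS$.} By \autoref{prop:mainres:DM_E}, each triplet $(\restr{\mu}{I_p}, \restr{\nu}{J_p}, \EEE_p)$ is exactly scalable. Hence, applying \autoref{lm:prelim:piopt_SSS} (second part) to each component, we obtain a nonnegative $\pi^{(p)}$ supported on $\EEE_p$ with marginals $\restr{\mu}{I_p}$ and $\restr{\nu}{J_p}$ that is positive on all of $\SSS(\restr{\mu}{I_p}, \restr{\nu}{J_p}, \EEE_p) = \EEE_p$. The $I_p$ partition $\{1\dots m\}$ and the $J_p$ partition $\{1\dots n\}$, so the block-diagonal plan $\pi = \sum_p \pi^{(p)}$ lies in $\Delta_\EEE$. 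It satisfies $X_\sharp\pi=\mu$ and $Y_\sharp\pi=\nu$, and it is positive on every edge of $\bigsqcup_p \EEE_p$. By \autoref{def:prelim:scalable_triplet}, every such edge therefore lies in $\SSS$.

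\emph{Inclusion $\SSS \subset \bigsqcup_p \EEE_p$.} Let $\pi$ be any feasible plan. It suffices to show that $\pi_{ij}=0$ for every $(i,j)\in \EEE\cap(I_p\times J_q)$ with $p\neq q$. By \autoref{prop:mainres:DM_E}, such an edge satisfies $p\to q$, so it is an arc of the DAG. Fix a topological order of the DAG, so that $p\to q$ implies $p$ precedes $q$. Let $U$ be any down-closed set of components, i.e.\ closed under predecessors: if $q\in U$ and $p\to q$ then $p\in U$.

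Any mass leaving $\bigcup_{p\in U} I_p$ must land either in $\bigcup_{q\in U}J_q$ or in some $J_q$ with $q\notin U$. Conversely, every mass arriving in $\bigcup_{q\in U}J_q$ must come from rows in $\bigcup_{p\in U}I_p$. The second claim holds because an edge from $I_{p'}$ into $J_q$ with $q\in U$ and $p' \neq q$ means $p'\to q$, hence $p'\in U$ by down-closedness. Combining these,
\begin{equation}
    \sum_{q\in U}\sum_{j\in J_q}\nu_j = \pi\Big(\bigcup_{p\in U} I_p \times \bigcup_{q\in U}J_q\Big) = \sum_{p\in U}\sum_{i\in I_p}\mu_i - \pi\Big(\bigcup_{p\in U} I_p \times \bigcup_{q\notin U}J_q\Big).
\end{equation}
Since $\sum_{I_p}\mu=\sum_{J_p}\nu$ for each $p$, the two totals over $U$ agree. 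Hence the mass sent from $U$-rows to non-$U$-columns is zero.

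It remains to choose $U$ so that a given arc $p\to q$ goes from $U$ to its complement. Take $U$ to be the down-closure of $\{p\}$, i.e.\ $p$ together with all its ancestors. Acyclicity guarantees $q\notin U$: otherwise $q$ would be an ancestor of $p$, and together with $p\to q$ this would form a cycle. So $\pi_{ij}=0$ on $I_p\times J_q$ for every feasible $\pi$, which gives $\SSS\subset\bigsqcup_p\EEE_p$.

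The main point requiring care is this second direction: the mass-balance argument over down-closed sets, and using acyclicity to place $q$ outside $U$. The first direction is a direct assembly from \autoref{lm:prelim:piopt_SSS}.
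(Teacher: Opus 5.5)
Your proof is correct. The first inclusion matches the paper's Step~1 (block-diagonal assembly of the plans given by \autoref{lm:prelim:piopt_SSS} on each exactly scalable component). For the second inclusion you take a genuinely different route.

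The paper argues by induction on the depth of $q$ in the DM interaction DAG. A root $q$ receives all of its column mass from its own row block $I_q$. Inductively, each parent $p$ of $q$ keeps all of its row mass inside $J_p$, so no mass flows along $p \to q$.

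You replace this induction with a single cut argument. For a predecessor-closed set $U$ of components, every edge entering $\bigcup_{q \in U} J_q$ starts in $\bigcup_{p \in U} I_p$. The block identities $\sum_{i\in I_p}\mu_i = \sum_{j\in J_p}\nu_j$ then force the mass sent from $\bigcup_{p\in U} I_p$ to columns outside $U$ to vanish. You take $U$ to be $p$ together with its ancestors, so acyclicity is needed only to guarantee $q \notin U$.

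Your version gives a non-inductive proof in which acyclicity is used in one isolated line. It also yields a slightly more general conclusion: every feasible plan has zero outflow from every predecessor-closed set of components. The paper's version instead proceeds component by component along the DAG, carrying $\sum_{i \in I_q, j \in J_q} \pi_{ij} = \sum_{i\in I_q}\mu_i$ as its inductive invariant.

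The topological order you fix is never used and can be dropped.
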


\begin{proof}
    Denote for concision $\SSS = \SSS(\mu, \nu, \EEE)$.
    We proceed in two steps. 
    
    \underline{Step 1:} Let us show that the in-component edges of $\SSS$ are precisely all of the in-component edges of $\EEE$, that is,
    \begin{equation}
        \forall p \leq P,~~
        \SSS \cap (I_p \times J_p) = \EEE_p.
    \end{equation}
    The inclusion $\SSS \cap (I_p \times J_p) \subset \EEE_p$ holds by definition since $\SSS \subset \EEE$.
    For the reverse inclusion: for each $p \leq P$, $(\restr{\mu}{I_p}, \restr{\nu}{J_p}, \EEE_p)$ is exactly scalable, so by \autoref{lm:prelim:piopt_SSS} there exists $\pi^p \in (\RR_+)^{\EEE_p}$ such that $X_\sharp \pi^p = \restr{\mu}{I_p}$, $Y_\sharp \pi^p = \restr{\nu}{J_p}$ and $\forall (i,j) \in \EEE_p, \pi^p_{ij}>0$.
    Set $\pi^*_{ij} = \pi^p_{ij}$ if $(i,j) \in I_p \times J_p$ and $0$ otherwise; then $\pi^* \in \Delta_\EEE$, $X_\sharp \pi^* = \mu, Y_\sharp \pi^* = \nu$ and
    $\forall p, \forall (i,j) \in \EEE_p, \pi^*_{ij}>0$ and hence $(i,j) \in \SSS \cap (I_p \times J_p)$.

    \underline{Step 2:} It remains to show that $\SSS$ does not contain any cross-component edges, that is,
    ${\forall p \neq q,~
    \SSS \cap (I_p \times J_q) = \varnothing}$.
    Equivalently, we want to show that for any $\pi \in \Delta_\EEE$ such that $X_\sharp \pi = \mu$ and $Y_\sharp \pi = \nu$,
    \begin{equation}
        \forall p \neq q,~~
        \forall (i,j) \in \EEE \cap (I_p \times J_q),~~
        \pi_{ij} = 0.
    \end{equation}
    Fix henceforth such a $\pi$.
    Note that we may equivalently restrict to $p, q$ such that $p \to q$ in the DM interaction DAG, since $\EEE \cap (I_p \times J_q) = \varnothing$ otherwise.

    Let us show by induction that for any $q$,
    \begin{equation}
        \sum_{i \in I_q, j \in J_q} \pi_{ij} = \sum_{i \in I_q} \mu_i = \sum_{j \in J_q} \nu_j
        \qquad\text{and}\qquad
        \forall p ~~\text{s.t.}~ p \to q,~
        \forall i \in I_p, j \in J_q,~~
        \pi_{ij} = 0.
    \end{equation}
    We proceed by induction on the depth of $q$ (the maximal length of a path ending at $q$) in the DM interaction DAG.
    \begin{itemize}
        \item If $q$ is a root of this DAG, then all edges in $\EEE$ with one endpoint in $J_q$ must have the other one in $I_q$, i.e., 
        $\EEE \cap (\{1 \dots m\} \times J_q) = \EEE \cap (I_q \times J_q)$, so
        \begin{equation}
            \sum_{i \in I_q, j \in J_q} \pi_{ij}
            = \sum_{i \leq m} \sum_{j \in J_q} \pi_{ij}
            = \sum_{j \in J_q} (Y_\sharp \pi)_j
            = \sum_{j \in J_q} \nu_j.
        \end{equation}
        The second part of the desired property is trivially satisfied since there does not exist any $p$ such that $p \to q$.
        \item If $q$ is not a root of the DAG, we can assume by induction that all of its parents $p$ satisfy 
        $\sum_{i \in I_p, j \in J_p} \pi_{ij} = \sum_{i \in I_p} \mu_i$.
        Suppose by contradiction that there exists $p \to q$ and $i_0 \in I_p$, $j_0 \in J_q$ such that $\pi_{i_0j_0}>0$, then
        \begin{equation}
            \sum_{i \in I_p} \mu_i 
            = \sum_{i \in I_p} (X_\sharp \pi)_i
            = \sum_{i \in I_p} \sum_{j \leq n} \pi_{ij}
            \geq \sum_{i \in I_p, j \in J_p} \pi_{ij} + \pi_{i_0j_0}
            > \sum_{i \in I_p} \mu_i,
        \end{equation}
        a contradiction. This proves the second part of the desired property.
        To show the first part, it now suffices to note that
        \begin{equation}
            \sum_{j \in J_q} \nu_j
            = \sum_{j \in J_q} (Y_\sharp \pi)_j
            = \sum_{i \leq m} \sum_{j \in J_q} \pi_{ij}
            = \sum_{i \in I_q, j \in J_q} \pi_{ij}
            + \sum_{p:\, p \to q}~ 
            \underbrace{\sum_{i \in I_p, j \in J_q} \pi_{ij}}_{0}.
        \end{equation}
    \end{itemize}
    This concludes the proof by induction, and so the proof of Step~2 since $\pi$ was arbitrary.
\end{proof}

\pagebreak

Since $\SSS = \left\{ (i,j);~ \pi^*_{ij}>0 \right\}$ has no cross-component edges in the DM decomposition, then $\pi^*$ decomposes into $P$ ``diagonal'' blocks, each of which can be characterized as the optimal solution of an exactly scalable EOT problem. Our next proposition formalizes this.

\begin{proposition} \label{prop:mainres:DM_pi*}
    Denote by $(I_p \sqcup J_p, \EEE_p)$ the DM decomposition of $(\mu, \nu, \EEE)$ and by $(\{1 \dots P\}, \allowbreak \{(p,q);~ p \to q\})$ its DM interaction DAG.
    The optimal solution $\pi^*$ of \eqref{eq:intro:EOT} decomposes as
    \begin{equation}
        \forall p, q,~~~
        \forall i \in I_p, j \in J_q,~~~
        \pi^*_{ij} = \pi^{*p}_{ij} ~~\text{if}~ p = q
        ~~\text{and}~~ 0 ~\text{otherwise},
    \end{equation}
    where for all $p \leq P$,
    \begin{equation}
        \pi^{*p} = \argmin_{\pi \in (\RR_+)^{\EEE_p}} 
        \sum_{(i,j) \in \EEE_p} C_{ij} \pi_{ij} 
        + \tau \Hdiv{\pi}{\restr{\mu}{I_p} \otimes \restr{\nu}{J_p}}
        ~~~~\text{subject to}~~~~
        X_\sharp \pi = \restr{\mu}{I_p}
        ~~\text{and}~~
        Y_\sharp \pi = \restr{\nu}{J_p}.
    \end{equation}
    Note that $\big( \sum_{i \in I_p} \mu_i \big)^{-1} \pi^{*p} \in \Delta_{\EEE_p}$.
    Moreover, there exist $(f^{*p}, g^{*p}) \in \RR^{I_p} \times \RR^{J_p}$ such that
    \begin{equation}
        \forall p,~
        \forall (i,j) \in \EEE_p,~~
        \pi^{*p}_{ij} = e^{[-C_{ij} + f^{*p}_i + g^{*p}_j]/\tau} \mu_i \nu_j.
    \end{equation}
\end{proposition}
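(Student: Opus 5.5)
By \autoref{prop:mainres:DM_S}, $\pi^*$ vanishes on every cross-component edge. So $\pi^*$ is supported on $\bigsqcup_p (I_p \times J_p)$, and we write $\pi^*_{ij}$ for $i \in I_p$, $j \in J_p$ as the entries of a block $\tilde\pi^p \in (\RR_+)^{\EEE_p}$. The proof then has two steps: show that each block is the optimal solution $\pi^{*p}$ of the restricted problem, and then produce the dual potentials using exact scalability.

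\textbf{Step 1: block identification.} Since $X_\sharp \pi^* = \mu$ and there is no mass on cross-component edges, the row sums of $\tilde\pi^p$ are $\restr{\mu}{I_p}$, and likewise the column sums are $\restr{\nu}{J_p}$. Hence $\tilde\pi^p$ is feasible for the $p$-th restricted problem. The objective of \eqref{eq:intro:EOT}, evaluated on any plan supported on the diagonal blocks, is the sum over $p$ of the restricted objectives: the cost term splits trivially, and the entropy term splits because $\Hdiv{\pi}{\mu\otimes\nu} = \sum_{ij}\pi_{ij}\log\frac{\pi_{ij}}{\mu_i\nu_j}$ is a sum over entries. (Here I use the unnormalized form of the relative entropy, which matches the extension in the footnote of \autoref{prop:mainres:DM_E}.) Now suppose some $\tilde\pi^p$ were not the minimizer $\pi^{*p}$, which is unique by strict convexity. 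Replacing that block by $\pi^{*p}$ gives a plan that still has marginals $\mu,\nu$, still has total mass $1$, is still supported in $\EEE$, and has strictly smaller objective. This contradicts the optimality of $\pi^*$. So $\tilde\pi^p = \pi^{*p}$. The normalization remark in the statement follows because the total mass of $\pi^{*p}$ equals $\sum_{i\in I_p}\mu_i$.

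\textbf{Step 2: dual representation.} For fixed $p$, the restricted triplet $(\restr{\mu}{I_p}, \restr{\nu}{J_p}, \EEE_p)$ is exactly scalable. By \autoref{lm:prelim:piopt_SSS}, extended to the unnormalized setting by rescaling the marginals by $\big(\sum_{I_p}\mu_i\big)^{-1}$, $\pi^{*p}$ is strictly positive on all of $\EEE_p$. So $\pi^{*p}$ is an interior point of the face $(\RR_+)^{\EEE_p}$, and the first-order (KKT) conditions involve only the equality constraints. Stationarity of the Lagrangian gives
\[
C_{ij} + \tau\Big(\log\frac{\pi^{*p}_{ij}}{\mu_i\nu_j} + 1\Big) - f_i - g_j = 0
\quad\text{for all } (i,j)\in\EEE_p,
\]
for some multipliers $f\in\RR^{I_p}$ and $g\in\RR^{J_p}$. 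The multipliers exist because the constraints are linear, so no constraint qualification is needed beyond feasibility. Absorbing the constant $\tau$ into $f$ gives $f^{*p}$, and setting $g^{*p} = g$ yields the claimed exponential form.

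\textbf{Main obstacle.} The most delicate point is the justification in Step 2 that the Lagrange multipliers exist. The clean way to get this is to note that positivity makes the problem locally a smooth convex minimization over an affine subspace, so that $\nabla$ of the objective is orthogonal to the kernel of the marginal map. That kernel's orthogonal complement is exactly $\{(f_i+g_j)_{ij}\}$, which is the range of the adjoint of the marginal map. This is a standard linear-algebra fact, and it is what makes the representation hold.
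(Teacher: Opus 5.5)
Your proposal is correct and follows the paper's own route. You use \autoref{prop:mainres:DM_S} to remove the cross-component entries, then note that the problem restricted to the support splits into the $P$ independent block problems, then obtain finite potentials from exact scalability of each block. Your Step 2 (strict positivity on $\EEE_p$ via \autoref{lm:prelim:piopt_SSS}, then Lagrange multipliers from the orthogonal complement of the kernel of the marginal map) simply spells out the justification that the paper compresses into a one-line appeal to exact scalability.
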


\begin{proof}
    It follows directly from \autoref{prop:mainres:DM_S} that $\pi^*_{ij} = 0$ for all $i \in I_p, j \in J_q$ with $p \neq q$.
    For the characterization of the ``diagonal'' blocks, first note that by definition, $\pi^*$ is also optimal for the problem \eqref{eq:intro:EOT} restricted to $\Delta_\SSS$.
    Now since $\SSS = \bigsqcup_p \EEE_p$, this restricted problem decomposes into $P$ independent optimization problems, which are the ones displayed in the proposition statement.
    Finally, the existence of finite optimal dual variables $(f^{*p}, g^{*p})$ for the restricted problems follows from the fact that $(\restr{\mu}{I_p}, \restr{\nu}{J_p}, \EEE_p)$ is exactly scalable for each $p$.
\end{proof}

\subsection{Local convergence analysis} \label{subsec:mainres:loc}

In this section, we prove the following local convergence result.

\begin{theorem} \label{thm:mainres:loc}
    Let $\pi^*_{\min} = \min_{(i,j) \in \SSS} \pi^*_{ij}$.
    Suppose there exists $k_0 \geq 1$ such that
    \begin{equation}
        \forall k \geq k_0,~ 
        \forall (i,j) \in \SSS,~
        \pi^k_{ij} \geq \pi^*_{\min}/2.
    \end{equation}
    Denote by $(I_p \sqcup J_p, \EEE_p)$ the DM decomposition of $(\mu, \nu, \EEE)$ and by $(\{1 \dots P\}, \{(p,q);~ p \to q\})$ its DM interaction DAG.
    Let $\mathrm{diam}(\SSS)$ be the maximal length of a shortest path between two vertices in $(\{1 \dots m\} \sqcup \{1 \dots n\}, \SSS)$,
    and let $\ell$ be the maximal length of a path in the DM interaction DAG.
    Then
    \begin{equation}
        \forall k \geq k_0 + 1,~~
        V_k \leq \frac{2 \tau A^2}{k-k_0}
        \qquad\text{and}\qquad
        \forall k \geq 2k_0 + 2,~~
        E_k \leq \frac{4 \sqrt{2}\, A}{\sqrt{k (k - 2k_0)}}
    \end{equation}
    where $A = \frac{\ell}{2} + (\ell+1) \mathrm{diam}(\SSS) \log(2 / \pi^*_{\min})$.
\end{theorem}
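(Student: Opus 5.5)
The plan is to prove a \L{}ojasiewicz-type inequality
\begin{equation}
    \forall k \geq k_0,\qquad V_k \leq \tau A\, E_k \leq \sqrt{2}\,\tau A\, G_k ,
\end{equation}
where the second inequality is \autoref{lm:prelim:Ek_Gk}, and then to feed it into the energy identity of \autoref{lm:prelim:Vk+1_Vk_Gk2}. Given the inequality, that identity yields
\begin{equation}
    \frac{1}{V_{k+1}} - \frac{1}{V_k} = \frac{\tau G_k^2}{V_k V_{k+1}} \geq \frac{\tau G_k^2}{V_k^2} \geq \frac{1}{2\tau A^2},
\end{equation}
using $V_{k+1} \leq V_k$. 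Summing from $k_0$ gives $V_k \leq 2\tau A^2/(k-k_0)$. The bound on $E_k$ then follows from the doubling trick \autoref{lm:prelim:Ek2_Vk/2}:
\begin{equation}
    E_k^2 \leq \frac{8}{\tau k}\cdot \frac{2\tau A^2}{\lceil k/2\rceil - k_0} \leq \frac{32 A^2}{k(k-2k_0)},
\end{equation}
which is exactly the stated constant $4\sqrt{2}\,A$. So everything reduces to showing $\mathrm{H}(\pi^*\|\pi^k) \leq A\, E_k$ for $k \geq k_0$.

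\textbf{Reduction to a duality pairing.} For this I would write $\pi^k_{ij} = e^{[-C_{ij}+f^k_i+g^k_j]/\tau}\mu_i\nu_j$ and use the block dual potentials $(f^{*p},g^{*p})$ of \autoref{prop:mainres:DM_pi*}. On each component set $u_i = (f^{*p}_i - f^k_i + c_p)/\tau$ for $i\in I_p$ and $v_j = (g^{*p}_j - g^k_j - c_p)/\tau$ for $j \in J_p$, with gauge constants $c_p$ to be chosen. With this choice, $\log(\pi^*_{ij}/\pi^k_{ij}) = u_i + v_j$ for every $(i,j)\in\SSS$, whatever the $c_p$. Since $\SSS$ has no cross-component edges (\autoref{prop:mainres:DM_S}), we get
\begin{equation}
    \mathrm{H}(\pi^*\|\pi^k) = \mu^\top u + \nu^\top v
    = \big[(\mu - X_\sharp\pi^k)^\top u + (\nu - Y_\sharp \pi^k)^\top v\big]
    + \sum_{(i,j)\in\EEE}\pi^k_{ij}(u_i+v_j).
\end{equation}
The bracket is at most $\max(\|u\|_\infty,\|v\|_\infty)\, E_k$.

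\textbf{Bounding the in-support terms and the oscillation of $(u,v)$.} On $\SSS$, the hypothesis $\pi^k_{ij}\geq \pi^*_{\min}/2$ together with $\pi^*_{ij},\pi^k_{ij}\leq 1$ gives $|u_i+v_j| \leq \log(2/\pi^*_{\min})$. Chaining this bound along shortest paths in $\SSS$ shows that, within each component, the oscillation of $u$ and of $v$ is at most $\mathrm{diam}(\SSS)\log(2/\pi^*_{\min})$. For the in-support part of the last sum I would not bound it crudely. Instead I would use $\sum_{\SSS}\pi^k_{ij}(u_i+v_j) = -\sum_{\SSS}\pi^k\log(\pi^k/\pi^*) \leq \sum_{\SSS}(\pi^*_{ij}-\pi^k_{ij}) = \sum_{\EEE\setminus\SSS}\pi^k_{ij}$, which is the cross-component mass.

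\textbf{Main obstacle: cross-component edges and the choice of gauges.} The remaining difficulty is the edges $(i,j)\in\EEE\cap(I_p\times J_q)$ with $p\to q$. There, $u_i+v_j$ involves potentials from two different components and is not controlled by $\pi^*$ at all. My first attempt is to choose the gauges $c_p$ inductively along a topological order of the DM interaction DAG. The aim is that on every cross edge $u_i+v_j$ is either $\leq 0$, which is favorable, or bounded by a multiple of $\mathrm{diam}(\SSS)\log(2/\pi^*_{\min})$. The price is that the accumulated offsets make $\|u\|_\infty,\|v\|_\infty$ of order $(\ell+1)\,\mathrm{diam}(\SSS)\log(2/\pi^*_{\min})$ after normalization.

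Separately, I would bound the total cross mass $\sum_{\EEE\setminus\SSS}\pi^k_{ij}$ by $\tfrac{\ell}{2}E_k$. For this I would redo the induction on depth from the proof of \autoref{prop:mainres:DM_S} quantitatively, with marginal errors in place of exact equalities. Each level of the DAG should contribute at most the local marginal mismatch, and the factor $\tfrac12$ comes from $E_k$ counting only one nonzero marginal error by \autoref{lm:prelim:Ek_Gk}.

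Combining the three contributions should give $\mathrm{H}(\pi^*\|\pi^k) \leq \big[\tfrac{\ell}{2} + (\ell+1)\mathrm{diam}(\SSS)\log(2/\pi^*_{\min})\big]E_k = A\,E_k$. I expect getting this gauge and sign bookkeeping on the cross edges to work, with exactly these constants, to be the delicate step.
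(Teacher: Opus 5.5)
Your architecture is the paper's: prove $V_k\le\tau A E_k$ for $k\ge k_0$, then use $V_{k+1}-V_k=-\tau G_k^2$ and the doubling trick. That last part is correct as you wrote it.

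Your duality reduction is also correct, and it is equivalent to the paper's proof of \autoref{lm:mainres:bound_insupport}. There, $D_h(\pi^*_{ij}\|\pi_{ij})\le(\pi^*_{ij}-\pi_{ij})\log(\pi^*_{ij}/\pi_{ij})$ is expanded blockwise and combined with \autoref{lm:prelim:decomp_Vk}. The result is exactly your bracket $+\sum_{\EEE\setminus\SSS}\pi^k_{ij}+\sum_{\EEE\setminus\SSS}\pi^k_{ij}(u_i+v_j)$, which is what your $\log x\le x-1$ step produces.

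The cross edges you single out as the main obstacle are harmless. Normalize the gauge separately on each connected component of $\SSS$, so that $\norm{u}_\infty,\norm{v}_\infty\le\Lambda'$ with $\Lambda'=\mathrm{diam}(\SSS)\log(2/\pi^*_{\min})$. One constant per DM block is not enough, since $\EEE_p$ may be disconnected. Then simply use $\abs{u_i+v_j}\le 2\Lambda'$ on cross edges. This gives $\Hdiv{\pi^*}{\pi^k}\le\Lambda'E_k+(1+2\Lambda')\sum_{\EEE\setminus\SSS}\pi^k_{ij}$, which is exactly $AE_k$ once the cross mass is at most $\frac{\ell}{2}E_k$; this is the paper's route.

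Your DAG-gauge idea also works if you make it explicit. Shift by $c_p=\tau\Lambda'(2\ell_p-\ell)$, where $\ell_p$ is the depth of $p$. Then $\norm{u}_\infty,\norm{v}_\infty\le(\ell+1)\Lambda'$, and on every cross edge $p\to q$ you get $u_i+v_j\le 2\Lambda'-2\Lambda'(\ell_q-\ell_p)\le 0$. This yields the same constant $(\ell+1)\Lambda'+\frac{\ell}{2}=A$. Your proposal, however, leaves this step undone.

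The genuine gap is the cross-mass bound $\sum_{\EEE\setminus\SSS}\pi^k_{ij}\le\frac{\ell}{2}E_k$. Your explanation of the factor $\frac12$ is wrong: it has nothing to do with \autoref{lm:prelim:Ek_Gk}. The paper's inequality holds for every $\pi\in\Delta_\EEE$, with both marginal errors present. A depth induction that charges each block its full local mismatch, as you describe, gives only $\ell E_k$, even when $X_\sharp\pi^k=\mu$. That leads to the constant $A+\frac{\ell}{2}$ instead of $A$.

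The case $1\to 2$ shows where the halving really comes from. The cross mass $\Pi_{12}$ appears both as a deficit on $J_1$ and as an excess on $J_2$, so $\norm{Y_\sharp\pi-\nu}_1\ge 2\Pi_{12}$. The missing idea is block-level mass conservation combined with the depth potential. Let $\Pi_{pq}$ be the mass of $\pi$ on $I_p\times J_q$, let $\Pi_{q\bullet},\Pi_{\bullet q}$ be the block row and column sums, and recall $\ell_q\ge\ell_p+1$ whenever $p\to q$. Then
\begin{equation}
    \sum_{p\to q}\Pi_{pq}
    \leq \sum_{p\to q}(\ell_q-\ell_p)\Pi_{pq}
    = \sum_{q}\ell_q\big(\Pi_{\bullet q}-\Pi_{q\bullet}\big)
    = \sum_{q}\Big(\ell_q-\frac{\ell}{2}\Big)\big(\Pi_{\bullet q}-\Pi_{q\bullet}\big),
\end{equation}
where the last equality uses $\sum_q(\Pi_{\bullet q}-\Pi_{q\bullet})=1-1=0$. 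Then $\abs{\ell_q-\ell/2}\le\ell/2$ and $\abs{\Pi_{\bullet q}-\Pi_{q\bullet}}\le\abs{\Pi_{\bullet q}-M_q}+\abs{\Pi_{q\bullet}-M_q}$ give \autoref{lm:mainres:bound_offsupport}. This is the same depth potential that makes your gauge shift work, so the two halves of your argument are dual to each other.
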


\begin{remark}
    Note that $A$ depends on $\tau$ through $\pi^*_{\min}$. More precisely, 
    % using the characterization of \autoref{prop:mainres:DM_pi*}, one can check that
    we show in \autoref{lm:mainres:estim_pi*min} below that
    $\log(1/\pi^*_{\min})$ scales as $O(\frac1\tau)$, when $\tau$ is small.
    So the upper bounds on $V_k$ and $E_k$ shown in this theorem both scale as $O(\frac{1}{\tau k})$.
\end{remark}

\begin{remark}
    Our notation $\mathrm{diam}(G)$ corresponds to the diameter of a bipartite graph $G$ in the case where it is connected.
    If it is not connected, $\mathrm{diam}(G)$ is the maximum diameter of its connected components.
    In particular, we have
    $\mathrm{diam}(\SSS) = \max_p \mathrm{diam}(\EEE_p)$,
    but note that the $\EEE_p$ themselves may not be connected in general.
    
    Let us also clarify that the notion of ``diameter'' used to define $\mathrm{diam}(\cdot)$ is different than the one for $\ell$: the former considers the shortest paths between pairs of vertices, whereas the latter is a supremum over all paths in a directed graph.
\end{remark}

\vspace{0.25em}
The proof of \autoref{thm:mainres:loc} is based on using $V_k = \tau \Hdiv{\pi^*}{\pi^k}$ as a Lyapunov potential: we will show that $V_{k+1} \leq V_k - a V_k^2$ for all $k \geq k_0$ for some constant $a>0$, and the bound $V_k \leq \frac{1/a}{k-k_0}$ will follow.
To show this, we treat separately the two terms in the decomposition
$V_k/\tau = \sum_{(i,j) \in \SSS} 
D_h(\pi^*_{ij} \| \pi^k_{ij})
+ \sum_{(i,j) \in \EEE\setminus\SSS} \pi^k_{ij}$ from \autoref{lm:prelim:decomp_Vk}.
We start by the first term, i.e., the in-optimal-support edges.

\begin{lemma} \label{lm:mainres:bound_insupport}
    Let $f \in \RR^m, g \in \RR^n$ such that $Z(f,g) = 1$ and $\min_{(i,j) \in \SSS} \pi[f,g]_{ij} \geq \pi^*_{\min}/2$.
    Then $\pi = \pi[f,g]$ satisfies
    \begin{equation}
        \sum_{(i,j) \in \SSS} D_h(\pi^*_{ij} \| \pi_{ij})
        \leq \mathrm{diam}(\SSS) \log(2 / \pi^*_{\min}) \bigg( 
            \norm{X_\sharp \pi - \mu}_1 
            + \norm{Y_\sharp \pi - \nu}_1 
            + 2 \sum_{(i,j) \in \EEE \setminus \SSS} \pi_{ij}
        \bigg).
    \end{equation}
\end{lemma}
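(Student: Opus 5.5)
The plan is to reduce the divergence sum to a linear expression in dual-like potentials, and then control those potentials along shortest paths in $(\{1 \dots m\} \sqcup \{1 \dots n\}, \SSS)$.

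\textbf{Step 1 (linearize $D_h$).} For $p,q>0$, with $r=p/q$,
\[
(p-q)\log\tfrac{p}{q} - D_h(p \| q) = q\,(r-1-\log r) \geq 0 .
\]
Hence
\[
\sum_{(i,j) \in \SSS} D_h(\pi^*_{ij} \| \pi_{ij}) \leq \sum_{(i,j) \in \SSS} (\pi^*_{ij}-\pi_{ij}) \log \frac{\pi^*_{ij}}{\pi_{ij}} .
\]

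\textbf{Step 2 (potential representation of the log-ratio).} By \autoref{prop:mainres:DM_pi*} and \autoref{prop:mainres:DM_S}, every $(i,j) \in \SSS$ lies in some $\EEE_p$, where $\pi^*_{ij} = e^{[-C_{ij}+f^{*p}_i+g^{*p}_j]/\tau}\mu_i\nu_j$. Since $Z(f,g)=1$, we also have $\pi_{ij} = e^{[-C_{ij}+f_i+g_j]/\tau}\mu_i\nu_j$. Dividing,
\[
\log(\pi^*_{ij}/\pi_{ij}) = u_i + v_j \quad \text{on } \SSS,
\]
where $u_i = (f^{*p}_i - f_i)/\tau$ for $i \in I_p$ and $v_j = (g^{*p}_j - g_j)/\tau$ for $j \in J_p$. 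The constraint $u_i+v_j = \log(\pi^*_{ij}/\pi_{ij})$ only involves edges of $\SSS$. So on each connected component of $(\{1 \dots m\}\sqcup\{1 \dots n\},\SSS)$ we may replace $(u,v)$ by $(u+c,\, v-c)$ without changing it. We keep this gauge freedom for Step 4.

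Now sum over $\SSS$ and use $X_\sharp\pi^*=\mu$ and $Y_\sharp\pi^*=\nu$ (recall $\pi^*$ vanishes off $\SSS$). Since $\sum_{j:(i,j)\in\SSS}\pi_{ij} = (X_\sharp\pi)_i - \sum_{j:(i,j)\in\EEE\setminus\SSS}\pi_{ij}$, and similarly for columns, we obtain
\[
\sum_{\SSS} (\pi^*_{ij}-\pi_{ij})(u_i+v_j) = u^\top(\mu - X_\sharp\pi) + v^\top(\nu - Y_\sharp\pi) + \sum_{(i,j)\in\EEE\setminus\SSS} \pi_{ij}(u_i+v_j).
\]
This is bounded by
\[
\big(\max_i|u_i| \vee \max_j|v_j|\big)\Big(\norm{X_\sharp\pi-\mu}_1 + \norm{Y_\sharp\pi-\nu}_1 + 2\sum_{\EEE\setminus\SSS}\pi_{ij}\Big).
\]
So it suffices to choose the gauge so that $\max_i|u_i| \vee \max_j|v_j| \leq \mathrm{diam}(\SSS)\log(2/\pi^*_{\min})$.

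\textbf{Step 3 (edgewise bound).} On $\SSS$, both $\pi^*_{ij}$ and $\pi_{ij}$ lie in $[\pi^*_{\min}/2,\, 1]$: the lower bound holds for $\pi^*$ by definition of $\pi^*_{\min}$ and for $\pi$ by hypothesis, and both are probability vectors. Therefore $|u_i+v_j| \leq L := \log(2/\pi^*_{\min})$ for every $(i,j)\in\SSS$.

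\textbf{Step 4 (gauge fixing and path telescoping).} This is the main step. In each connected component of the $\SSS$-graph, pick a root vertex and use the gauge freedom to set its potential to $0$. For any other vertex $w$ of that component, take a shortest path from the root to $w$; its length is at most $\mathrm{diam}(\SSS)$. Along consecutive edges, the potentials satisfy identities such as
\[
v_j = (u_i+v_j) - u_i, \qquad u_{i'} = (u_{i'}+v_j) - v_j .
\]
Telescoping therefore expresses the potential of $w$ as an alternating sum of edge values $\pm(u+v)$, one per edge. By Step 3 this gives $|u_i|,|v_j| \leq \mathrm{diam}(\SSS)\,L$, and combining with Steps 1–2 concludes the proof.

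The points needing care are that the gauge can be chosen independently per connected component of $\SSS$, even within a single DM block, since the $\EEE_p$ may be disconnected. The cross edges $\EEE\setminus\SSS$ join vertices with independently chosen gauges, which is harmless because we bound them crudely by $|u_i|+|v_j|$. Isolated vertices do not arise, since $\pi^*$ has full marginals, so every vertex has an $\SSS$-edge.
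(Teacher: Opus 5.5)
Your proposal is correct and follows essentially the same route as the paper's proof. Both bound $D_h(\pi^*_{ij}\|\pi_{ij})$ by $(\pi^*_{ij}-\pi_{ij})\log(\pi^*_{ij}/\pi_{ij})$, write the log-ratio via the block potentials $(f^{*p}-f)/\tau$, $(g^{*p}-g)/\tau$, reduce to marginal errors plus cross-edge mass, and bound the potentials by gauge fixing per connected component and telescoping the edgewise bound $\log(2/\pi^*_{\min})$ along shortest paths (summing over the whole support rather than block by block is an equivalent bookkeeping choice).
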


\begin{proof}
    Let $\pi^{*p} \in (\RR_+)^{\EEE_p}, f^{*p} \in \RR^{I_p}, g^{*p} \in \RR^{J_p}$ be as defined in \autoref{prop:mainres:DM_pi*}.
    Recall that $\SSS = \bigsqcup_p \EEE_p$.
    For all $p \leq P$ and $(i,j) \in \EEE_p$,
    $\pi^*_{ij} = \pi^{*p}_{ij} = e^{[-C_{ij}+f^{*p}_i+g^{*p}_j]/\tau} \mu_i \nu_j$,
    so by definition of
    $\pi_{ij} = e^{[-C_{ij}+f_i+g_j]/\tau} \mu_i \nu_j$
    and of 
    $D_h(a \| b) = a \log \frac{a}{b} - a + b$, 
    \begin{align}
        \forall (i,j) \in \EEE_p,~~
        D_h(\pi^*_{ij} \| \pi_{ij})
        \leq D_h(\pi^*_{ij} \| \pi_{ij})
        + D_h(\pi_{ij} \| \pi^*_{ij})
        &= (\pi^*_{ij} - \pi_{ij}) \log(\pi^*_{ij} / \pi_{ij}) \\
        &= \frac1\tau (\pi^*_{ij} - \pi_{ij})
        \left( f^{*p}_i - f_i + g^{*p}_j - g_j \right).
    \end{align}
    Thus
    \begin{align}
        \tau \sum_{(i,j) \in \EEE_p}
        D_h(\pi^*_{ij} \| \pi_{ij})
        &\leq \sum_{(i,j) \in \EEE_p}
        (\pi^*_{ij} - \pi_{ij})
        \left( f^{*p}_i - f_i + g^{*p}_j - g_j \right) \\
        &=~ \sum_{i \in I_p} \sum_{j \leq n}
        (\pi^*_{ij} - \pi_{ij})
        (f^{*p}_i - f_i)
        - \sum_{i \in I_p} \sum_{j \not\in J_p}
        (0-\pi_{ij}) (f^{*p}_i - f_i) \\
        &~~~ + \sum_{i \leq m} \sum_{j \in J_p}
        (\pi^*_{ij} - \pi_{ij}) (g^{*p}_j - g_j)
        - \sum_{i \not\in I_p} \sum_{j \in J_p}
        (0-\pi_{ij}) (g^{*p}_j - g_j) \\
        &=~ \sum_{i \in I_p} \left[ (X_\sharp \pi^*)_i - (X_\sharp \pi)_i \right] (f^{*p}_i - f_i)
        + \sum_{q:\, p \to q}\, \sum_{i \in I_p, j \in J_q} \pi_{ij} (f^{*p}_i - f_i) \\
        &~~~ + \sum_{j \in J_p} \left[ (Y_\sharp \pi^*)_j - (Y_\sharp \pi)_j \right] (g^{*p}_j - g_j)
        + \sum_{o:\, o \to p}\, \sum_{i \in I_o, j \in J_p} \pi_{ij} (g^{*p}_j - g_j).
    \end{align}
    Let $\Lambda_p = \max_{i \in I_p} \abs{f^{*p}_i - f_i} \vee \max_{j \in J_p} \abs{g^{*p}_j - g_j}$ and $\Lambda = \max_p \Lambda_p$.
    Recalling that $X_\sharp \pi^* = \mu$ and $Y_\sharp \pi^* = \nu$, we have
    \begin{align*}
        \tau \!\! \sum_{(i,j) \in \SSS} \!D_h(\pi^*_{ij} \| \pi_{ij})
        = \tau \sum_p \! \sum_{(i,j) \in \EEE_p} \!\! D_h(\pi^*_{ij} \| \pi_{ij})
        &\leq \Lambda \Big(
            \norm{\mu - X_\sharp \pi}_1
            + \norm{\nu - Y_\sharp \pi}_1
            + 2\, \sum_{p \to q}\, \sum_{i \in I_p, j \in J_q} \!\! \pi_{ij} 
        \Big) \\
        &= \Lambda \Big(
            \norm{\mu - X_\sharp \pi}_1
            + \norm{\nu - Y_\sharp \pi}_1
            + 2 \sum_{(i,j) \in \EEE \setminus \SSS} \pi_{ij}
        \Big).
    \end{align*}

    It only remains to bound $\Lambda_p = \max_{i \in I_p} \abs{f^{*p}_i - f_i} \vee \max_{j \in J_p} \abs{g^{*p}_j - g_j}$ for all $p$.
    First note that
    \begin{align}
        \forall (i,j) \in \EEE_p,~~~
        f^{*p}_i - f_i + g^{*p}_j - g_j
        &= \tau \log(\pi^*_{ij} / \pi_{ij})
        \leq \tau \log(1 / \min_\SSS \pi)
        \leq \tau \log(2 / \pi^*_{\min}) \\
        \text{and} \quad
        f^{*p}_i - f_i + g^{*p}_j - g_j
        &\geq \tau \log(\pi^*_{\min}) \\
        \text{so} \quad
        \abs{f^{*p}_i - f_i + g^{*p}_j - g_j} 
        &\leq \tau \log(2 / \pi^*_{\min}).
    \end{align}
    Suppose for now that $(I_p \sqcup J_p, \EEE_p)$ is connected.
    Up to replacing $(f^{*p}, g^{*p})$ by $(f^{*p} + c \bmone_m, g^{*p} - c \bmone_n)$ for some $c \in \RR$, which does not affect $\pi^{*p} = e^{[-C_{ij} + f^{*p}_i + g^{*p}_j]/\tau} \mu_i \nu_j$,
    we may assume without loss of generality that $f^{*p}_{i_0} - f_{i_0} = 0$ for some $i_0 \in I_p$.
    Then for any neighbor $j$ of $i_0$ in $\EEE_p$, $\abs{g^{*p}_j - g_j} \leq \tau \log (2/\pi^*_{\min})$.
    Further, for any neighbor $i$ of a neighbor of $i_0$ in $\EEE_p$, $\abs{f^{*p}_i - f_i} \leq 2 \tau \log (2/\pi^*_{\min})$.
    Continuing, by induction we obtain that
    \begin{equation}
        \Lambda_p \leq \mathrm{diam}(\SSS) \cdot \tau \log (2/\pi^*_{\min}),
    \end{equation}
    since $\mathrm{diam}(\SSS)$ is an upper bound on the maximal length of a shortest path between $i_0$ and another vertex in $(I_p \sqcup J_p, \EEE_p)$.
    Finally, if $(I_p \sqcup J_p, \EEE_p)$ is not connected,
    one can apply the same reasoning to each of its connected components, choosing the gauge of one special vertex $i_0$ for each component.
\end{proof}

Next, we show a bound on the second term in the decomposition of $V_k/\tau$ from \autoref{lm:prelim:decomp_Vk}: the mass assigned by $\pi^k$ to the off-optimal-support edges,
$\sum_{(i,j) \in \EEE\setminus\SSS} \pi^k_{ij}$.

\begin{lemma} \label{lm:mainres:bound_offsupport}
    For any $\pi \in \Delta_\EEE$, we have
    $\sum_{(i,j) \in \EEE\setminus\SSS} \pi_{ij}
    \leq \frac{\ell}{2} \left( \norm{X_\sharp \pi - \mu}_1 + \norm{Y_\sharp \pi - \nu}_1 \right)$.
\end{lemma}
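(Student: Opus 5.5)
The plan is to reduce the statement to a flow-conservation argument on the DM interaction DAG of \autoref{prop:mainres:DM_E}. By \autoref{prop:mainres:DM_S}, $\EEE\setminus\SSS = \bigsqcup_{p \to q} \EEE \cap (I_p \times J_q)$. So I would define the cross-component flows $F_{pq} = \sum_{i \in I_p, j \in J_q} \pi_{ij} \geq 0$ for $p \to q$. The quantity to bound is then $\sum_{p \to q} F_{pq}$.

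\textbf{Step 1 (divergence identity).} For each component $q$, let $M_q = \sum_{i \in I_q, j \in J_q} \pi_{ij}$ be the in-block mass. Summing the row marginals over $I_q$ gives $\sum_{i \in I_q} (X_\sharp \pi)_i = M_q + \sum_{r:\, q \to r} F_{qr}$. Summing the column marginals over $J_q$ gives $\sum_{j \in J_q} (Y_\sharp \pi)_j = M_q + \sum_{o:\, o \to q} F_{oq}$. Subtracting, and using $\sum_{i \in I_q}\mu_i = \sum_{j \in J_q}\nu_j$ from \autoref{prop:mainres:DM_E}, the net outflow satisfies
\begin{equation}
    \delta_q := \sum_{r:\, q\to r} F_{qr} - \sum_{o:\, o \to q} F_{oq}
    = \sum_{i \in I_q} \big[(X_\sharp\pi)_i - \mu_i\big] - \sum_{j \in J_q} \big[(Y_\sharp \pi)_j - \nu_j\big].
\end{equation}
Hence $\sum_q \abs{\delta_q} \leq \norm{X_\sharp \pi - \mu}_1 + \norm{Y_\sharp\pi - \nu}_1$. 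Also $\sum_q \delta_q = 0$, since each edge's flow is counted once positively and once negatively.

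\textbf{Step 2 (potential on the DAG).} Let $d_q \in \{0,\dots,\ell\}$ be the depth of $q$, i.e.\ the maximal length of a path ending at $q$. Then $p \to q$ implies $d_q \geq d_p + 1$. Therefore
\begin{equation}
    \sum_{p\to q} F_{pq} \leq \sum_{p \to q} F_{pq}(d_q - d_p) = -\sum_q d_q \delta_q = -\sum_q \big(d_q - \tfrac{\ell}{2}\big)\delta_q \leq \frac{\ell}{2}\sum_q \abs{\delta_q}.
\end{equation}
The middle equality regroups the sum by vertices. The subtraction of $\ell/2$ is allowed because $\sum_q \delta_q = 0$, and the last step uses $\abs{d_q - \ell/2} \leq \ell/2$. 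Combining with Step 1 gives the claim.

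The only delicate point is getting the factor $\ell/2$ rather than $\ell$. This comes from centering the potential, which is legitimate because total divergence vanishes. Equivalently, one can use a path decomposition of the acyclic flow: each path has at most $\ell$ edges, and the total path weight is at most $\sum_q (\delta_q)_+ = \frac12 \sum_q \abs{\delta_q}$. Everything else is bookkeeping of which edges lie in which blocks, which \autoref{prop:mainres:DM_S} already provides.
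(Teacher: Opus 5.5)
Your proposal is correct and follows essentially the same argument as the paper: a depth potential $d_q$ with $d_q - d_p \geq 1$ along DAG edges, summation by parts into net block flows, centering by $\ell/2$ using the vanishing total divergence, and bounding the block-level imbalances by the vertex-level marginal errors. The only cosmetic difference is that you write the net outflow $\delta_q$ directly as a difference of block marginal errors, whereas the paper adds and subtracts $M_q$ and applies the triangle inequality; your path-decomposition remark is also a valid alternative way to see the factor $\ell/2$.
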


\begin{proof}
    Since $\SSS = \bigsqcup_p \EEE_p$, then $\EEE \setminus \SSS$ consists precisely of the cross-DM-component edges: 
    $\EEE \setminus \SSS = \bigsqcup_{p \to q} \EEE \cap (I_p \times J_q)$.
    For concision, for any $p, q \leq P$, 
    \begin{itemize}
        \item Let $\Pi_{pq} = \sum_{i \in I_p, j \in J_q} \pi_{ij}$,
        so that
        $\sum_{(i,j) \in \EEE\setminus\SSS} \pi_{ij} 
        = \sum_{p \to q} \Pi_{pq}$.
        \item Also set 
        $\Pi_{p\bullet}
        = \sum_{i \in I_p} \sum_{j \leq n} \pi_{ij}
        = \sum_{q=1}^P \Pi_{pq}$
        and likewise for $\Pi_{\bullet q}$.
        \item Correspondingly, set
        $M_p = \sum_{i \in I_p} \mu_i = \sum_{j \in J_p} \nu_j$.
        \item Furthermore, let $\ell_p$ denote the maximal length of a path ending at $p$ in the DM interaction DAG.
        Note that for any edge $p \to q$ in the DAG, we have $\ell_q - \ell_p \geq 1$.
        Let $\ell = \max_p \ell_p$, consistent with the notation in the statement of \autoref{thm:mainres:loc}.
    \end{itemize}
    Then
    \begin{align}
        \sum_{(i,j) \in \EEE\setminus\SSS} \pi_{ij}
        = \sum_{p \to q} \Pi_{pq}
        &\leq \sum_{p \to q} (\ell_q-\ell_p) \Pi_{pq} \\
        &= \sum_{q=1}^P \ell_q \left( \sum_{p:\, p \to q} \Pi_{pq} - \sum_{r:\, q \to r} \Pi_{qr} \right) \\
        &= \sum_{q=1}^P \ell_q~ \Big( \Pi_{\bullet q} - \Pi_{qq} - \Pi_{q \bullet} + \Pi_{qq} \Big)
        = \sum_{q=1}^P \ell_q \left( \Pi_{\bullet q} - \Pi_{q \bullet} \right).
    \end{align}
    Since $\sum_{q=1}^P \Pi_{\bullet q} - \Pi_{q \bullet} = 1 - 1 = 0$, we also have
    \begin{equation}
        \sum_{(i,j) \in \EEE\setminus\SSS} \pi_{ij}
        \leq \sum_{q=1}^P \left(\ell_q - \frac{\ell}{2} \right) \left( \Pi_{\bullet q} - \Pi_{q \bullet} \right).
    \end{equation}
    So by triangle inequality at the block level, since $\max_q \abs{\ell_q - \ell/2} = \ell/2$ by definition,
    and adding and subtracting $M_q = \sum_{i \in I_q} \mu_i = \sum_{j \in J_q} \nu_j$,
    \begin{align}
        \sum_{(i,j) \in \EEE\setminus\SSS} \pi_{ij}
        &\leq \frac{\ell}{2} \sum_{q=1}^P \abs{\Pi_{\bullet q} - \Pi_{q \bullet}}
        = \frac{\ell}{2} \sum_{q=1}^P \abs{\Pi_{\bullet q} - M_q - \Pi_{q \bullet} + M_q} \\
        &\leq \frac{\ell}{2} \left(
            \sum_{q=1}^P \abs{\Pi_{\bullet q} - M_q} 
            + \sum_{p=1}^P \abs{\Pi_{p \bullet} - M_p} 
        \right).
    \end{align}
    Finally, since
    $\Pi_{p\bullet} - M_p
    = \sum_{i \in I_p} \left( \sum_{j \leq n} \pi_{ij} \right)
    - \sum_{i \in I_p} \mu_i
    = \sum_{i \in I_p} (X_\sharp \pi)_i - \mu_i$
    and likewise for $\Pi_{\bullet q} - M_q$,
    by triangle inequalities at the vertex level we obtain
    \begin{equation}
        \sum_{(i,j) \in \EEE\setminus\SSS} \! \pi_{ij}
        \leq \frac{\ell}{2}
        \left(
            \sum_q \sum_{j \in J_p} \abs{(Y_\sharp \pi)_j - \nu_j}
            + \sum_p \sum_{i \in I_p} \abs{(X_\sharp \pi)_i - \mu_i}
        \right)
        = \frac{\ell}{2}
        \left(
            \norm{Y_\sharp \pi - \nu}_1
            + \norm{X_\sharp \pi - \mu}_1
        \right),
    \end{equation}
    as announced.
\end{proof}

By combining \autoref{lm:mainres:bound_insupport} and \autoref{lm:mainres:bound_offsupport}, we have shown that for all $k$ such that $\min_{\SSS} \pi^k_{ij} \geq \pi^*_{\min}/2$,
\begin{align}
    V_k/\tau 
    &= \sum_{(i,j) \in \SSS} 
    D_h(\pi^*_{ij} \| \pi^k_{ij})
    + \sum_{(i,j) \in \EEE\setminus\SSS} \pi^k_{ij} \\
    &\leq \mathrm{diam}(\SSS) \log(2 / \pi^*_{\min}) \bigg( 
        \norm{X_\sharp \pi^k - \mu}_1 
        + \norm{Y_\sharp \pi^k - \nu}_1 
        + 2 \sum_{(i,j) \in \EEE \setminus \SSS} \pi^k_{ij}
    \bigg)
    + \sum_{(i,j) \in \EEE \setminus \SSS} \pi^k_{ij} \\
    &\leq \left(
        \mathrm{diam}(\SSS) \log(2 / \pi^*_{\min}) 
        + \frac{\ell}{2} \big[ 1 + 2\, \mathrm{diam}(\SSS) \log(2 / \pi^*_{\min}) \big]
    \right)
    \left( \norm{X_\sharp \pi^k - \mu}_1 + \norm{Y_\sharp \pi^k - \nu}_1  \right).
\end{align}
So we are now in a position to apply the same derivation as done by \cite{dvurechensky2018computational} for the exactly scalable case, and this will complete the proof of \autoref{thm:mainres:loc}.
Let us show the derivation in detail.

\begin{proof}[Proof of \autoref{thm:mainres:loc}]
    By combining \autoref{lm:mainres:bound_insupport} and \autoref{lm:mainres:bound_offsupport}, we have that for all $k \geq k_0$,
    \begin{align}
        V_k/\tau
        \leq A E_k
        \qquad\text{where}\qquad
        A &= \mathrm{diam}(\SSS) \log(2 / \pi^*_{\min}) 
        + \frac{\ell}{2} \big[ 1 + 2\, \mathrm{diam}(\SSS) \log(2 / \pi^*_{\min}) \big] \\
        &= \frac{\ell}{2} + (\ell+1) \mathrm{diam}(\SSS) \log(2 / \pi^*_{\min}).
    \end{align}
    Now by \autoref{lm:prelim:Ek_Gk} and \autoref{lm:prelim:Vk+1_Vk_Gk2},
    $E_k^2 \leq 2 G_k^2 = 2 (V_k - V_{k+1})/\tau$, so
    \begin{align}
        V_k^2
        \leq \tau^2 A^2 E_k^2
        &\leq 2 \tau A^2 (V_k-V_{k+1}) \\
        V_{k+1} &\leq V_k - \frac{1}{2\tau A^2} V_k^2.
    \end{align}
    Hence by a classical argument (see, e.g., \cite[Lemma~2.4]{wang2026almost}),
    \begin{equation}
        \forall k \geq k_0+1,~~
        V_k \leq \frac{2 \tau A^2}{k-k_0}.
    \end{equation}
    This proves the first inequality announced in the theorem statement.
    To obtain the second one, it suffices to apply \autoref{lm:prelim:Ek2_Vk/2}, yielding
    \begin{equation}
        \forall k \geq 2 k_0 + 2,~~
        E_k^2 \leq \frac{8}{\tau k} V_{\ceil{k/2}}
        \leq \frac{8}{\tau k} \cdot 
        \frac{2 \tau A^2}{k/2-k_0}
        \leq \frac{32 A^2}{k (k-2 k_0)},
    \end{equation}
    as announced.
\end{proof}

\subsection{Global convergence guarantee} \label{subsec:mainres:glob}

Due to the qualitative convergence $\pi^k \to \pi^*$ shown by \cite{sinkhorn1967concerning}, necessarily the localness condition $\min_{(i,j) \in \SSS} \pi^k_{ij} \geq \pi^*_{\min}/2$ is satisfied for all $k$ greater than some $k_0$.
So \autoref{thm:mainres:loc} already shows that $E_k, V_k \leq O(1/k)$ in the regime $k \to \infty$.
But to show a computational complexity bound, one still needs to estimate this iterate $k_0$ explicitly.
This is the object of this section.

Let us first recall the state-of-the-art global convergence bound.

\begin{theorem}[{\cite[Theorem~3.2]{wang2026almost}}] \label{thm:mainres:wan26}
    Pose $\osc_\EEE(C) = \max_{(i,j) \in \EEE} C_{ij} - \min_{ij} C_{ij}$
    and
    $\Delta = \min_{\substack{I \subset \{1 \dots m\} \\ J \subset \{1 \dots n\}}}~
    \abs{\sum_{i \in I} \mu_i - \sum_{j \in J} \nu_j}$
    subject to
    $\sum_{i \in I} \mu_i \neq \sum_{j \in J} \nu_j$.
    Let $\ell$ be as defined in \autoref{thm:mainres:loc}.
    Then $V_k = \tau \Hdiv{\pi^*}{\pi^k}$ satisfies
    \begin{equation}
        \forall k \geq e \ell^2 + 1,~~
        V_k/\tau \leq \frac{2}{k-1} 
        \left[ 
            B + \ell\, \log \frac{k-1}{\ell^2}
        \right]^2
    \end{equation}
    where
    $B = \left( \frac{\osc_\EEE(C)}{\tau} - \log (\mu_{\min} \vee \nu_{\min}) \right)
    \left( \ell + \frac{2 (\ell + 1)}{\Delta} \right)$.
\end{theorem}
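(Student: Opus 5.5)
The approach is a global version of the argument behind \autoref{thm:mainres:loc}. \autoref{lm:prelim:Vk_Psi} writes $V_k = \Psi(f^k,g^k) - \inf\Psi$. Since $\Psi$ does not attain its infimum when $\SSS \neq \EEE$, I would compare $(f^k,g^k)$ with a \emph{finite} near-minimizer $(\tilde f^s,\tilde g^s)$, indexed by a free parameter $s>0$. To build it, take the block dual variables $(f^{*p},g^{*p})$ of \autoref{prop:mainres:DM_pi*} and shift them along the DM interaction DAG: set $\tilde f^s_i = f^{*p}_i - \tau s\,\ell_p$ for $i \in I_p$ and $\tilde g^s_j = g^{*p}_j + \tau s\,\ell_p$ for $j \in J_p$, with $\ell_p$ the depth used in the proof of \autoref{lm:mainres:bound_offsupport}. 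This leaves the in-component entries unchanged. On a cross edge $p\to q$ the entry picks up the factor $e^{-s(\ell_q-\ell_p)} \le e^{-s}$. I would then check directly that $\Psi(\tilde f^s,\tilde g^s) - \inf\Psi \le \tau\, c\, e^{-s}$ for a constant $c$ controlled by the cross-edge mass, after absorbing the normalization $Z \neq 1$.

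Next I would use convexity of $\Psi$, whose gradient at $(f^k,g^k)$ is $(X_\sharp\pi^k - \mu,\, Y_\sharp\pi^k-\nu)$ since $Z(f^k,g^k)=1$. This gives
\begin{equation}
V_k \le \Psi(f^k,g^k) - \Psi(\tilde f^s,\tilde g^s) + \tau c e^{-s} \le E_k\,\big(\norm{f^k-\tilde f^s}_\infty \vee \norm{g^k-\tilde g^s}_\infty\big) + \tau c e^{-s},
\end{equation}
where the dual variables are defined modulo the gauge $(f+b\bmone_m, g - b\bmone_n)$, and this gauge should be fixed favourably. The $\infty$-distance splits as (a bound $\tau R$ on $\norm{f^k-f^*}$-type quantities within components) plus $\tau s\,\ell$. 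The conclusion is $V_k/\tau \le (R+\ell s)E_k + c e^{-s}$, and I expect $R$ to match the prefactor in $B$, namely $(\osc_\EEE(C)/\tau - \log(\mu_{\min}\vee\nu_{\min}))$ times $(\ell + 2(\ell+1)/\Delta)$.

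The main obstacle is bounding $\norm{f^k - \tilde f^s}_\infty$ uniformly in $k$, up to the $\ell s$ term. Within a single DM component, the standard argument bounds the oscillation of $f[g]$ and $g[f]$ by $\osc_\EEE(C) - \tau\log(\mu_{\min}\vee\nu_{\min})$, because each update is a soft-min over neighbours. Across components the iterates genuinely drift, by a relative amount of order $\tau\log k$, and this drift is what the shift $\tau s\ell_p$ is meant to absorb. To control the residual, I would compare the mass that $\pi^k$ places on $\bigcup_{p\in\mathcal P} I_p$ with that on $\bigcup_{p\in \mathcal P}J_p$ for downward-closed sets $\mathcal P$ of the DAG. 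Whenever $\sum_{I}\mu \neq \sum_J \nu$, these sums differ by at least $\Delta$. Hence a large inter-block potential gap would force a marginal discrepancy incompatible with the monotone decrease of $V_k$. I expect this to produce the factor $2(\ell+1)/\Delta$ after propagating along paths of length at most $\ell$.

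Finally, combine with $E_k^2 \le 2G_k^2 = 2(V_k-V_{k+1})/\tau$ from \autoref{lm:prelim:Ek_Gk} and \autoref{lm:prelim:Vk+1_Vk_Gk2}, and make a $k$-dependent choice $s = \log((k-1)/\ell^2)$. This yields a recursion of the form $V_{k+1} \le V_k - \frac{(V_k - \tau c e^{-s})_+^2}{2\tau (B+\ell s)^2}$. Unrolling it as in \cite[Lemma~2.4]{wang2026almost}, with the slowly varying coefficient frozen over the range of $k$ considered, gives $V_k/\tau \le \frac{2}{k-1}[B+\ell\log\frac{k-1}{\ell^2}]^2$ for $k\ge e\ell^2+1$. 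The threshold $k \ge e\ell^2+1$ ensures $s\ge 1$, so that the $e^{-s}$ remainder is dominated by the main term.
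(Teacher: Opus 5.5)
\paragraph{There is a genuine gap.} The paper does not prove this statement: it imports it from \cite[Theorem~3.2]{wang2026almost} and uses it only to locate $k_0$. Judged on its own, your frame is sensible: a finite shifted comparator, convexity of $\Psi$, and the recursion with $s$ frozen at the target $k$. But the step that carries the theorem is missing, namely a bound $\norm{f^{k'}-\tilde f^s}_\infty \vee \norm{g^{k'}-\tilde g^s}_\infty \leq \tau(R+\ell s)$ valid for all $k' \leq k$. The mechanism you sketch for it cannot work.

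First, a fixable slip. For $i\in I_p$, $j\in J_q$, $p\to q$, your shifts give $\tilde f^s_i+\tilde g^s_j = f^{*p}_i+g^{*q}_j+\tau s(\ell_q-\ell_p)$, so cross entries are multiplied by $e^{s(\ell_q-\ell_p)}\geq e^{s}$. You need $+\tau s\ell_p$ on $I_p$ and $-\tau s\ell_p$ on $J_p$, which is also the direction in which the iterates drift.

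The main problem is that this drift is unbounded as $k'\to\infty$, and no static argument can cap it.
\begin{itemize}
\item A large gap in the favourable direction simply makes the cross mass small, with $\Pi_{p\bullet}\approx\Pi_{\bullet p}\approx M_p$ (notation of the proof of \autoref{lm:mainres:bound_offsupport}). It creates no marginal discrepancy, and it makes $V_{k'}$ small rather than contradicting its monotone decrease. The sublevel sets of $\Psi$ are unbounded exactly along these directions.
\item For $I=\bigcup_{p\in\mathcal P}I_p$ and $J=\bigcup_{p\in\mathcal P}J_p$, one always has $\sum_{I}\mu=\sum_{J}\nu$ by \autoref{prop:mainres:DM_E}. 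So $\Delta$ says nothing about unions of whole blocks, and your downward-closed-set comparison is vacuous.
\end{itemize}
What your scheme needs is a dynamical bound: roughly, after $k'$ steps the drift is at most about $\tau\log k'$ per DAG level. The reason is that each half-step moves $f_i$ by $\tau\abs{\log((X_\sharp\pi^{k'})_i/\mu_i)}$, and the inter-level part of that imbalance is of the order of the cross mass, itself of order $e^{-\delta/\tau}$ for a drift $\delta$. In your scheme, this is exactly where the $\log k$ comes from.

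Alternatively, you can avoid capping the favourable drift. Pair the block-constant parts $\pm a_p$ of $(f^{k'}-\tilde f^s,\, g^{k'}-\tilde g^s)$ with the block-aggregated marginal errors; this produces exactly $\sum_{p\to q}\Pi_{pq}(a_p-a_q)$. Then only lagging behind the comparator costs anything, and that is controlled via $V_{k'}\leq V_1$ and the telescoping in \autoref{lm:mainres:bound_offsupport}. Either way, this step has to be carried out.

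Two further steps are asserted without support.

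\emph{In-block control.} The soft-min argument bounds the oscillation of $f[g]$ by $\osc_\EEE(C)$ alone only for complete support. For a sparse $\EEE_p$, $f[g]_i$ and $f[g]_{i'}$ are soft-mins over different neighbourhoods, and their difference involves $g_j-g_{j'}$, which is not controlled a priori. This is where a Hall-slack argument with $\Delta$ is genuinely needed (compare the use of \cite[Proposition~2.7, Lemma~A.2]{wang2026almost} in \autoref{lm:mainres:estim_pi*min}). You need it for the iterates, uniformly in $k$, with constants linear in $\osc_\EEE(C)/\tau$. The easy route via $V_k\leq V_1$ only gives $\pi^k_{ij}\geq\pi^*_{ij}e^{-1-V_1/(\tau\pi^*_{ij})}$ on $\SSS$. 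That is a radius of order $V_1/(\tau\pi^*_{\min})$, potentially exponential in $\osc_\EEE(C)/\tau$. The present paper gets good in-block control only under the localness assumption of \autoref{thm:mainres:loc}.

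\emph{Constants.} With $s=\log((k-1)/\ell^2)$ frozen, your recursion yields $V_k/\tau\leq\frac{2(R+\ell s)^2+c\,\ell^2}{k-1}$, where $c=\sum_{(i,j)\in\EEE\setminus\SSS}e^{[-C_{ij}+f^{*p}_i+g^{*q}_j]/\tau}\mu_i\nu_j$. This $c$ depends on the relative gauges of the blocks. Under the normalization of \autoref{lm:mainres:estim_pi*min}, it is only bounded by a quantity exponential in $(1+2/\Delta)(\osc_\EEE(C)/\tau-\log(\mu_{\min}\vee\nu_{\min}))$. The condition $s\geq 1$ does not make it negligible. Absorbing it requires shifting $s$ by about $\log c$, which adds about $\ell\log c$ to the radius. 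That may well be how a constant of the shape of $B$ arises, but as written, both the match with $B$ and the absence of an additive term are only expectations.
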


In order to get a fully explicit bound, let us also show the following estimate on $\pi^*_{\min}$.

\begin{lemma} \label{lm:mainres:estim_pi*min}
    Let $\osc_\EEE(C)$ and $\Delta$ be as defined in \autoref{thm:mainres:wan26}.
    Then $\pi^*_{\min} = \min_{(i,j) \in \SSS} \pi^*_{ij}$ satisfies
    \begin{equation}
        -\log \pi^*_{\min} 
        \leq \frac{2}{\Delta} \left( \frac{\osc_{\EEE}(C)}{\tau} - \log(\mu_{\min} \vee \nu_{\min}) \right)
        - \log(\mu_{\min} \vee \nu_{\min})
        ~\eqqcolon D.
    \end{equation}
\end{lemma}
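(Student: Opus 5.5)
The plan is to work block by block using \autoref{prop:mainres:DM_pi*}. On each DM component $p$ and each edge $(i,j) \in \EEE_p$, write $\pi^*_{ij} = e^{[-C_{ij}+f^{*p}_i+g^{*p}_j]/\tau}\mu_i\nu_j$. After shifting $C$ by a constant (which only changes the gauge), we may assume $0 \le C_{ij} \le \osc_\EEE(C)$ on $\EEE$. Let $u = f^{*p}/\tau$ and $v = g^{*p}/\tau$. It then suffices to lower bound $u_i + v_j$ for every $(i,j)\in\EEE_p$ by $-\frac{2}{\Delta}\big(\frac{\osc_\EEE(C)}{\tau} - \log(\mu_{\min}\vee\nu_{\min})\big)$, up to an additive $\osc/\tau$ and $\log(\mu_i\nu_j)$ bookkeeping that produces the last term $-\log(\mu_{\min}\vee\nu_{\min})$. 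We may restrict to a connected component of $\EEE_p$, since gauges can be chosen independently per component. A free upper bound comes from $\pi^*_{ij}\le 1$, which gives $u_i+v_j \le \frac{\osc}{\tau} - \log(\mu_i\nu_j)$ on every edge. The marginal equations $\sum_j \pi^*_{ij}=\mu_i$ then give, for each $i$, an edge with $\pi^*_{ij}\ge \mu_i/n$ (and symmetrically for each $j$), so there is no vertex whose incident edges are all tiny.

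The core step is a level-set argument exploiting $\Delta$. Fix a threshold $s$ and let $I_s=\{i: u_i > s\}$ and $J_s = \{j: v_j \ge -s\}$ (within the component). Every edge of $\EEE_p$ leaving $I_s$ towards $J_s^c$, and every edge entering $J_s$ from $I_s^c$, has $u_i+v_j$ on one controlled side of $0$. Comparing the mass balance $\sum_{i\in I_s}\mu_i = \sum_{i\in I_s, j}\pi^*_{ij}$ with $\sum_{j\in J_s}\nu_j=\sum_{i,j\in J_s}\pi^*_{ij}$ shows that $\big|\sum_{I_s}\mu_i - \sum_{J_s}\nu_j\big|$ is at most the total mass on the edges crossing the ``cut'' between $(I_s,J_s)$ and its complement. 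By the definition of $\Delta$, this imbalance is either $0$ or at least $\Delta$.

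The strategy is then a layer-cake/coarea argument in $s$. Let $s$ range over the interval between the extreme potentials, and integrate the crossing mass over $s$. An edge $(i,j)$ crosses the cut at level $s$ for $s$ in an interval of length $|u_i + v_j|$ (approximately), so the integral equals $\sum_{(i,j)}\pi^*_{ij}|u_i+v_j|$. This quantity is bounded by the entropy-type quantity $\frac{\osc}{\tau} - \log(\mu_{\min}\vee\nu_{\min})$, using $\pi\log(1/\pi)$-type bounds and the upper bound from $\pi^*\le 1$. On the other hand, the integral is at least $\Delta$ times the measure of the set of levels where the imbalance is nonzero. It remains to show that levels with zero imbalance cannot lie between the endpoints of an edge $(i_0,j_0)$ with very negative $u_{i_0}+v_{j_0}$. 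If the imbalance were zero, then $(I_s,J_s)$ would be a balanced sub-block carrying all of its own mass, so the crossing edge $(i_0,j_0)$ would have to carry zero mass. This contradicts $\pi^*_{i_0j_0}>0$, which holds since $(i_0,j_0)\in\SSS$ by \autoref{prop:mainres:DM_S}. The factor $2$ in $\frac{2}{\Delta}$ should come from treating the two directions of crossing (the $\mu$-side and the $\nu$-side) separately. Combining the two estimates bounds $-(u_{i_0}+v_{j_0})$, and hence $-\log\pi^*_{i_0 j_0}\le D$.

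I expect the main obstacle to be the middle step: making the cut/imbalance inequality precise, so that a zero-imbalance level genuinely forces zero mass on a crossing edge. Crossing edges can go in both directions across the cut, and they can partially cancel in the balance. The first remedy to try is to define the cut using $u_i > s$ together with $v_j > -s$, and to separate forward from backward crossing mass, bounding each by the imbalance plus the other. If that fails, the fallback is the cruder sorted-gap argument over at most $m+n$ levels, with a weaker constant.
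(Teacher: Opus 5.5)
\textbf{Comparison with the paper.} The paper does not prove a potential bound itself. It imports from \cite[Proposition~2.7, Lemma~A.2]{wang2026almost} that each block's potentials can be centered so that $|f^{*p}_i-K_p/2|,\,|g^{*p}_j-K_p/2|\le\frac1\Delta\big(\osc_{\EEE_p}(C)-\tau\log(\mu_{\min}\vee\nu_{\min})\big)$, with $K_p=\max_{\EEE_p}C-\tau\log(\mu_{\min}\vee\nu_{\min})$, and substitutes this bound.

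\textbf{The core level-set step fails.} Reproving the imported bound by a coarea argument is a reasonable alternative, but your core step does not work as set up. With $I_s=\{u_i>s\}$ and $J_s=\{v_j\ge -s\}$, the sign of $u_i+v_j$ is controlled only on the internal edges: it is $>0$ on $I_s\times J_s$ and $<0$ on $I_s^c\times J_s^c$. The edges that actually enter $\mu(I_s)-\nu(J_s)=\pi^*(I_s\times J_s^c)-\pi^*(I_s^c\times J_s)$ have no sign control, and they cross on half-lines of $s$, not on intervals of length $|u_i+v_j|$.

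Flipping to $J_s=\{v_j<-s\}$ repairs the coarea identity. But then the step ``zero imbalance forces zero mass on the crossing edge'' is false, because forward and backward crossing masses can cancel. For example, take $m=n=2$, $\mu=\nu=(\frac12,\frac12)$, full $\EEE$, and $C$ chosen so that $u=(T,0)$, $v=(-T,0)$ and $\pi^*_{12}=\pi^*_{21}>0$. Then $I_s=J_s=\{1\}$ is balanced for every $s\in[0,T)$, even though the edge $(2,1)$ with $u_2+v_1=-T$ crosses. Your remedy of bounding each direction by the imbalance plus the other is circular.

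\textbf{The missing idea: make crossing one-directional.}
\begin{itemize}
\item Use $\pi^*_{ij}\le\mu_i\wedge\nu_j$ instead of $\pi^*_{ij}\le 1$. This gives the \emph{uniform} bound $u_i+v_j\le K:=\max_{(i,j)\in\EEE}C_{ij}/\tau-\log(\mu_{\min}\vee\nu_{\min})$ on all edges.
\item Take $J_s=\{j:\,K-v_j>s\}$. Then $I_s\times J_s^c$ contains no edge, so $\nu(J_s)-\mu(I_s)=\pi^*(I_s^c\times J_s)\ge 0$.
\item If this quantity vanished, no edge at all would join $I_s\sqcup J_s$ to its complement, since every edge of $\EEE_p$ carries positive mass. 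Inside a connected component this is impossible for $s$ in the range, of length $\rho$, of $\{u_i\}\cup\{K-v_j\}$. Hence $\pi^*(I_s^c\times J_s)\ge\Delta$ on that whole range.
\item Integrating over the range gives
\[
\Delta\rho\le\sum_{(i,j)}\pi^*_{ij}\,(K-u_i-v_j)\le\frac{\osc_\EEE(C)}{\tau}-\log(\mu_{\min}\vee\nu_{\min}).
\]
The second inequality uses $u_i+v_j=C_{ij}/\tau+\log\frac{\pi^*_{ij}}{\mu_i\nu_j}$ and the fact that $\sum\pi^*_{ij}\log\frac{\pi^*_{ij}}{\mu_i\nu_j}\ge 0$ over a component (a relative entropy plus $M\log\frac1M$ with $M\le 1$).
\item Therefore $u_i+v_j\ge K-\rho$ on every edge. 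The centering is at $K$, not at $0$; this is also what absorbs the stray $\osc_\EEE(C)/\tau$ in your bookkeeping.
\end{itemize}

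\textbf{The final bookkeeping cannot give the stated last term.} The repaired argument yields $-\log\pi^*_{ij}\le\rho+\log\frac{\mu_{\min}\vee\nu_{\min}}{\mu_i\nu_j}\le\rho-\log(\mu_{\min}\wedge\nu_{\min})$, and the factor $\mu_i\nu_j$ cannot be controlled better. This is not a defect of your method. With $\vee$ meaning $\max$, as elsewhere in the paper (e.g.\ \autoref{lm:prelim:Ek_Gk}), the statement is false.

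Take $m=1$, $n=2$, $\mu=(1)$, $\nu=(1-\epsilon,\epsilon)$ with $\epsilon<\frac12$, and $C\equiv 0$. The only feasible plan is $\pi^*=\nu$, so $-\log\pi^*_{\min}=\log(1/\epsilon)>0$. On the other hand $\osc_\EEE(C)=0$ and $\mu_{\min}\vee\nu_{\min}=1$, so $D=0$.

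The paper's own proof has the same slip: its first inequality needs $-\tau\log(\mu_i\nu_j)\le-2\tau\log(\mu_{\min}\vee\nu_{\min})$, which fails in this example. The last term of $D$ should be $-\log(\mu_{\min}\wedge\nu_{\min})$. With that correction, the repaired one-directional argument proves the lemma, even with $\frac1\Delta$ in place of $\frac2\Delta$.
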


\begin{proof}
    By \autoref{prop:mainres:DM_S}, $\SSS = \bigsqcup_p \EEE_p$ and by \autoref{prop:mainres:DM_pi*}, we have
    % $\pi^*_{ij} = 0$ for all $i \in I_p, j \in J_q$ with $p \neq q$ and
    \begin{equation}
        \forall p,~~ \forall (i,j) \in \EEE_p,~~
        \pi^*_{ij} = \pi^{*p}_{ij} = e^{[-C_{ij} + f^{*p}_i + g^{*p}_j]/\tau} \mu_i \nu_j.
    \end{equation}
    Now by \cite[Proposition~2.7, Lemma~A.2]{wang2026almost},
    denoting $K_p = \max_{(i,j) \in \EEE_p} C_{ij} - \tau \log(\mu_{\min} \vee \nu_{\min})$,
    % (adapted from \cite[Theorem~5.1]{kalantari2008complexity}), 
    the $(f^{*p}, g^{*p})$ can be chosen such that for each $p$,
    \begin{equation}
        \max_{i \in I_p} \abs{f^{*p}_i - \frac{K_p}{2}},~~
        \max_{j \in J_p} \abs{g^{*p}_j - \frac{K_p}{2}}
        \leq \frac{1}{\Delta} \big( \osc_{\EEE_p}(C) - \tau \log(\mu_{\min} \vee \nu_{\min}) \big).
    \end{equation}
    Hence,
    \begin{multline}
        \forall (i,j) \in \EEE_p,~~
        -\tau \log \pi^{*p}_{ij}
        = C_{ij} - f^{*p}_i - g^{*p}_j - \tau \log(\mu_i \nu_j)
        \leq K_p - f^{*p}_i - g^{*p}_j
        - \tau \log(\mu_{\min} \vee \nu_{\min}) \\
        \leq \frac{2}{\Delta} \big( \osc_{\EEE_p}(C) - \tau \log(\mu_{\min} \vee \nu_{\min}) \big)
        - \tau \log(\mu_{\min} \vee \nu_{\min})
    \end{multline}
    and thus
    \begin{equation}
        -\tau \log \pi^*_{\min}
        = \max_p \max_{(i,j) \in \EEE_p} \left( -\tau \log \pi^{*p}_{ij} \right)
        \leq \frac{2}{\Delta} \big( \osc_{\EEE}(C) - \tau \log(\mu_{\min} \vee \nu_{\min}) \big)
        - \tau \log(\mu_{\min} \vee \nu_{\min})
    \end{equation}
    since $\max_p \osc_{\EEE_p}(C) \leq \osc_{\EEE}(C)$ by definition.
\end{proof}

By combining the global convergence bound of \autoref{thm:mainres:wan26} with the local convergence analysis of \autoref{subsec:mainres:loc}, we obtain the following improved global bound. The proof consists in explicit computations and is delayed to \autoref{sec:apx_delayedpfs}.

\begin{theorem} \label{thm:mainres:glob}
    Let $A$ and $B$ be the constants defined in \autoref{thm:mainres:loc} and \autoref{thm:mainres:wan26}:
    $A = \frac{\ell}{2} + (\ell+1) \mathrm{diam}(\SSS) \log(2 / \pi^*_{\min})$,
    $B = \left( \frac{\osc_\EEE(C)}{\tau} - \log (\mu_{\min} \vee \nu_{\min}) \right)
    \left( \ell + \frac{2 (\ell + 1)}{\Delta} \right)$.
    Let $D$ denote the upper bound on $\log(1/\pi^*_{\min})$ shown in  \autoref{lm:mainres:estim_pi*min}:
    $D = \frac{2}{\Delta} \left( \frac{\osc_{\EEE}(C)}{\tau} - \log(\mu_{\min} \vee \nu_{\min}) \right)
    - \log(\mu_{\min} \vee \nu_{\min})$.
    Then we have
    \begin{equation}
        \forall k \geq 2 e \ell^2 + 6,~~
        E_k 
        \leq \sqrt{\frac{8 V_{\ceil{k/2}}}{\tau k}}
        \leq \frac{8}{k} \left(
            A + B 
            + 2 \ell \,\bigg(
                \log 64
                + 2 D + \log(B) + \log (D + \log 32)
            \bigg)
        \right).
    \end{equation}
\end{theorem}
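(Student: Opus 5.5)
The first inequality $E_k \le \sqrt{8V_{\lceil k/2\rceil}/(\tau k)}$ is just \autoref{lm:prelim:Ek2_Vk/2}, valid for $k\ge2$. So the work is to bound $V_{k'}$ for $k' = \lceil k/2\rceil$ by something like $\tau(\cdot)^2/k'$. For this I will combine \autoref{thm:mainres:loc} with \autoref{thm:mainres:wan26}.

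\textbf{Step 1: estimate the onset $k_0$ of the local regime.} The localness condition of \autoref{thm:mainres:loc} is $\pi^k_{ij}\ge \pi^*_{\min}/2$ for all $(i,j)\in\SSS$. A sufficient condition is a bound on $V_k$. By \autoref{lm:prelim:decomp_Vk}, $D_h(\pi^*_{ij}\|\pi^k_{ij}) \le V_k/\tau$ for each $(i,j)\in\SSS$. Moreover, if $q\le p/2$ then $D_h(p\|q)\ge p(\log 2 - 1/2)\ge p/8$. Hence $V_k/\tau < \pi^*_{\min}/8$ forces $\pi^k_{ij} > \pi^*_{\min}/2$ on $\SSS$. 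Since $V_k$ is nonincreasing by \autoref{lm:prelim:Vk+1_Vk_Gk2}, once this holds at some $k_0$ it holds for all $k\ge k_0$.

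To produce such a $k_0$, I use \autoref{thm:mainres:wan26}, which gives $V_k/\tau \le \frac{2}{k-1}[B+\ell\log\frac{k-1}{\ell^2}]^2$, together with $\pi^*_{\min}\ge e^{-D}$ from \autoref{lm:mainres:estim_pi*min}. So it suffices to find $k_0$ with
\begin{equation}
\frac{16}{k_0-1}\Big[B+\ell\log\frac{k_0-1}{\ell^2}\Big]^2 \le e^{-D}.
\end{equation}
I would take $k_0-1$ to be of order $16e^{D}\,[B+2\ell(\cdots)]^2$. Then I check the inequality by the standard fact that $x\ge 2a\log(2a)+2b$ implies $x\ge a\log x+b$, applied after taking square roots. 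This is where the logarithmic terms $\log 64$, $2D$, $\log B$ and $\log(D+\log 32)$ appear. The condition $k_0\ge e\ell^2+1$ must also be enforced, and this feeds the requirement $k\ge 2e\ell^2+6$.

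\textbf{Step 2: combine the two regimes.} Write $k'=\lceil k/2\rceil$. If $k'\ge 2k_0$, then \autoref{thm:mainres:loc} gives $V_{k'}\le 2\tau A^2/(k'-k_0)\le 4\tau A^2/k'$. If instead $k'<2k_0$, I would bound $V_{k'}$ directly by \autoref{thm:mainres:wan26}. In that regime $\log\frac{k'-1}{\ell^2}\lesssim \log k_0$, which is controlled by the logarithmic quantities from Step 1, so $V_{k'}/\tau\lesssim [B+2\ell(\log 64+2D+\log B+\log(D+\log 32))]^2/k'$.

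In both cases $V_{k'}\le \tau\,\mathrm{const}\cdot(A+B+2\ell(\cdots))^2/k'$. Plugging into $\sqrt{8V_{k'}/(\tau k)}$ with $k'\ge k/2$ gives a bound of the form $\frac{c}{k}(A+B+\dots)$. The constant $8$ in the statement should come out after bookkeeping, using $\sqrt{8\cdot 2\cdot 4}=8$ and $\max(x,y)\le x+y$.

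\textbf{Main obstacle.} The hardest part is the explicit inversion of the $\frac{(\log k)^2}{k}$ bound in Step 1. Its logarithm must be bounded in terms of $D$, $\log B$ and $\log\ell$ cleanly enough that a single expression covers both regimes. I would first try to show $\log\frac{k_0}{\ell^2}\le \log 64+2D+\log B+\log(D+\log 32)$, possibly with a factor 2 absorbed, using $\log(x+y)\le\log x+\log(1+y/x)$. I expect to have to adjust the numerical constants to make every case fit.
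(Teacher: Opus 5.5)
Your proposal is correct and follows the paper's proof in its overall structure. Both arguments use \autoref{thm:mainres:wan26} to find an onset $k_0$ of the local regime. They then apply \autoref{thm:mainres:loc} when $\lceil k/2\rceil\ge 2k_0$, apply \autoref{thm:mainres:wan26} directly below $2k_0$, and finish with \autoref{lm:prelim:Ek2_Vk/2}; your bookkeeping $\sqrt{8\cdot 2\cdot 4}=8$ matches the paper's.

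The one real difference is how you certify the localness condition.

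\textbf{The paper's route.} It uses Pinsker's inequality:
\[
\bigl\lvert \min_{\SSS}\pi^k-\pi^*_{\min}\bigr\rvert\le\lVert\pi^*-\pi^k\rVert_1\le\sqrt{2V_k/\tau}.
\]
This requires $V_k/\tau\lesssim(\pi^*_{\min})^2$, so $k_0$ scales like $e^{2D}$. The paper then checks the condition at every $k\ge k_0$ straight from the bound of \autoref{thm:mainres:wan26}. It writes that bound as $\frac{2B}{\ell t}+\frac{4\log t}{t}$ with $t=\sqrt{k-1}/\ell$ and bounds each term by $e^{-D}/4$, using \autoref{lm:apx_delayedpfs:bound_logt/t} for the second. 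No monotonicity of $V_k$ is needed.

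\textbf{Your route.} You take the per-edge bound $D_h(\pi^*_{ij}\|\pi^k_{ij})\le V_k/\tau$ from \autoref{lm:prelim:decomp_Vk} and use $D_h(p\|q)\ge p(\log 2-\tfrac12)$ for $q\le p/2$. This only needs $V_k/\tau\le\pi^*_{\min}/8$, which is linear rather than quadratic in $\pi^*_{\min}$. You then propagate the condition forward by monotonicity of $V_k$.

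\textbf{What your route buys.} It leaves slack in the constants. Take
\[
u_0=16e^{D/2}\log(16e^{D/2})+8e^{D/2}B/\ell,\qquad k_0=1+\lceil\ell^2u_0^2\rceil.
\]
Your inversion fact ($x\ge 2a\log(2a)+2b\Rightarrow x\ge a\log x+b$, valid for $2a\ge e$, $b\ge 0$) then yields the localness condition. In the regime $k'<2k_0$ you have $k'-1\le 2(k_0-1)\le 4\ell^2u_0^2$. Hence
\[
\ell\log\tfrac{k'-1}{\ell^2}\le 2\ell\log(2u_0)\le 2\ell\bigl(\log 64+\tfrac D2+\log B+\log(D+\log 32)\bigr).
\]
The last step needs the same kind of nonnegativity caveat on $\log B$ that the paper's own computation tacitly uses. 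This sits comfortably inside the stated bound, which has $2D$ where you get $D/2$. So the constant-fitting you flagged as the main obstacle is not actually an obstacle.

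\textbf{A small misattribution.} The hypothesis $k\ge 2e\ell^2+6$ is not driven by $k_0\ge e\ell^2+1$; your $k_0$ is automatically far larger than that. It is needed so that \autoref{thm:mainres:wan26} applies at $k'=\lceil k/2\rceil$ in the pre-local regime. That requires $k'\ge e\ell^2+1$, plus a couple of units of room for elementary inequalities such as $\frac{2}{k'-1}\le\frac{4}{k'}$.
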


In order to deduce the statement of \autoref{thm:intro:mainres}, it suffices to determine the dependency of $A, B$, and $D$ on the problem parameters.
Let us use $\Theta(\cdot)$ to hide constants dependent only on $\mu, \nu$, and $\EEE$, then
\begin{align}
    A &= \frac{\ell}{2} + (\ell+1) \mathrm{diam}(\SSS) ~\underbrace{\log(2 / \pi^*_{\min})}_{\leq \log 2 + D}~
    \leq \Theta(1 + D), \\
    B &= \left( \frac{\osc_\EEE(C)}{\tau} - \log (\mu_{\min} \vee \nu_{\min}) \right)
    \left( \ell + \frac{2 (\ell + 1)}{\Delta} \right)
    = \Theta\Big( 1 + \frac{\osc_\EEE(C)}{\tau} \Big), \\
    D &= \frac{2}{\Delta} \left( \frac{\osc_{\EEE}(C)}{\tau} - \log(\mu_{\min} \vee \nu_{\min}) \right)
    - \log(\mu_{\min} \vee \nu_{\min})
    = \Theta\Big( 1 + \frac{\osc_\EEE(C)}{\tau} \Big).
\end{align}
So the bound on $E_k$ proved in \autoref{thm:mainres:glob} reads
\begin{equation}
    k \cdot E_k \leq \Theta\left( A + B + D + \log B + \log D \right)
    \leq \Theta\left( A + B + D \right)
    = \Theta\Big( 1 + \frac{\osc_\EEE(C)}{\tau} \Big),
\end{equation}
and hence the inequality announced in \autoref{thm:intro:mainres}.

% \section{Extension to the non-scalable case} \label{sec:nonscal}
% \TODO{}
% 
% % When $\SSS = \EEE$, we say we are in the exactly scalable case; when $\SSS \neq \EEE$, we say we are in the strictly asymptotically scalable case.

\section{Conclusion} \label{sec:ccl}

In this work, we have shown that the Sinkhorn algorithm converges at the sharp rate of $O(1/k)$ in $\ell_1$-norm marginal error and in dual suboptimality.
This matches the lower bound in the strictly asymptotically scalable case \cite{qu2025sinkhorn}.
The proof is based on a local analysis, bootstrapped by the previous result of \cite{wang2026almost} that showed a global rate of $O(\log k/k)$.

This paper settles the question 
% open since \cite{sinkhorn1967concerning},
of the Sinkhorn algorithm's convergence rate for matrix balancing, or equivalently, for the classical setting of EOT as described in \autoref{sec:intro} (with potentially infinite transport costs).
The analogous questions for the multi-marginal or unbalanced variants of Sinkhorn remain open, and it would be interesting to see whether a similar analysis would go through.

% Additionally, as pointed out in \cite[Remark~3.2]{wang2026almost}, a similar logarithmic gap between upper bound ($O(\log k/k)$) and lower bound ($\Omega(1/k)$) exists in ......
% possibility of using similar ideas to shave off the $\log$ in the convergence bound for \cite{chizat2022convergence}
% ---> actually I want to think about this myself first

\section*{Acknowledgments}

A LLM (ChatGPT-5.5 Thinking) was used throughout this project, except at the writing stage (save for tikz code generation).
All of the key proof ideas were provided by it in some form.
The main contribution of the author was to reorganize and present the arguments in a comprehensible way.
The author assumes responsibility for all content.

% \newpage
% \nocite{*}
\printbibliography
\addcontentsline{toc}{section}{\refname} % Add References to (ToC and) bookmarks

%%%%%%%%%%%%%%%%%%%%%%%%%%%%%%%%%%%%%%%%%%%%%%%%%%%%%%
\ifextended%
\newpage
\appendix
\phantomsection
% \addcontentsline{toc}{chapter}{APPENDIX}
\addcontentsline{toc}{section}{APPENDIX}

%\subfile{appendix/A_}
%\subfile{appendix/B_}

% \section{Delayed proofs} \label{sec:apx_delayedpfs}
% \subsection{Proof of \autoref{thm:mainres:glob}}

\section{Proof of \autoref{thm:mainres:glob}} \label{sec:apx_delayedpfs}

In this section we provide the proof of \autoref{thm:mainres:glob}, restated below.

\begin{theorem*}[\autoref{thm:mainres:glob}, restated]
    Let $A$ and $B$ be the constants defined in \autoref{thm:mainres:loc} and \autoref{thm:mainres:wan26}:
    $A = \frac{\ell}{2} + (\ell+1) \mathrm{diam}(\SSS) \log(2 / \pi^*_{\min})$,
    $B = \left( \frac{\osc_\EEE(C)}{\tau} - \log (\mu_{\min} \vee \nu_{\min}) \right)
    \left( \ell + \frac{2 (\ell + 1)}{\Delta} \right)$.
    Let $D$ denote the upper bound on $\log(1/\pi^*_{\min})$ shown in  \autoref{lm:mainres:estim_pi*min}:
    $D = \frac{2}{\Delta} \left( \frac{\osc_{\EEE}(C)}{\tau} - \log(\mu_{\min} \vee \nu_{\min}) \right)
    - \log(\mu_{\min} \vee \nu_{\min})$.
    Then we have
    \begin{equation}
        \forall k \geq 2 e \ell^2 + 6,~~
        E_k 
        \leq \sqrt{\frac{8 V_{\ceil{k/2}}}{\tau k}}
        \leq \frac{8}{k} \left(
            A + B 
            + 2 \ell \,\bigg(
                \log 64
                + 2 D + \log(B) + \log (D + \log 32)
            \bigg)
        \right).
    \end{equation}
\end{theorem*}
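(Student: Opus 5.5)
The first inequality $E_k \leq \sqrt{8 V_{\ceil{k/2}}/(\tau k)}$ is exactly \autoref{lm:prelim:Ek2_Vk/2}, so the work is to bound $V_{\ceil{k/2}}$ by combining \autoref{thm:mainres:wan26} with \autoref{thm:mainres:loc}. The plan has three steps. First, find an explicit $k_0$ beyond which the localness condition of \autoref{thm:mainres:loc} holds. Then apply the local rate from $k_0$ onward. Finally, handle the range of $k$ where $k_0$ is comparable to $k$ by using \autoref{thm:mainres:wan26} directly.

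\textbf{Step 1 (localness from a relative-entropy bound).} If $\Hdiv{\pi^*}{\pi^k} \leq \delta$, then by \autoref{lm:prelim:decomp_Vk} every $(i,j)\in\SSS$ satisfies $D_h(\pi^*_{ij}\|\pi^k_{ij}) \leq \delta$. If instead $\pi^k_{ij} < \pi^*_{ij}/2$, then
\[
D_h(\pi^*_{ij}\|\pi^k_{ij}) \geq \pi^*_{ij}(\log 2 - 1/2) \geq c\,\pi^*_{\min}
\]
with $c = \log 2 - \tfrac12$. Moreover $V_k$ is nonincreasing by \autoref{lm:prelim:Vk+1_Vk_Gk2}. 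Hence the condition of \autoref{thm:mainres:loc} holds for all $k \geq k_0$ as soon as $V_{k_0}/\tau < c\,\pi^*_{\min}$, which is ensured by $V_{k_0}/\tau \leq e^{-D}/32$, say, using \autoref{lm:mainres:estim_pi*min}.

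\textbf{Step 2 (choosing $k_0$).} By \autoref{thm:mainres:wan26}, it suffices to take $k_0-1 = N$ with
\[
\frac{2}{N}\left[B + \ell \log \frac{N}{\ell^2}\right]^2 \leq \frac{e^{-D}}{32}, \qquad N \geq e\ell^2 .
\]
Write $N = 64\, e^{D} \cdot \Lambda^2$, where $\Lambda$ is the bracket, and solve the implicit inequality $\Lambda \geq B + \ell \log(64 e^{D} \Lambda^2/\ell^2)$. A standard fixed-point estimate of the form $x \geq a + b\log x \Leftarrow x \geq 2a + 2b\log(2b)$ gives an admissible choice
\[
\Lambda \approx B + 2\ell\bigl(\log 64 + 2D + \log B + \log(D+\log 32)\bigr).
\]
This is where the particular terms in the statement come from. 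I expect this bookkeeping to be the main obstacle: the constants must be tuned so that the final expression collapses to exactly the displayed form, and the condition $k \geq 2e\ell^2+6$ must guarantee that the relevant indices exceed $e\ell^2+1$.

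\textbf{Step 3 (combining).} Apply \autoref{thm:mainres:loc} at index $\ceil{k/2}$ to get
\[
V_{\ceil{k/2}} \leq \frac{2\tau A^2}{\ceil{k/2}-k_0}.
\]
When $\ceil{k/2} \geq 2k_0$, this gives $V_{\ceil{k/2}} \leq 8\tau A^2/k$, and hence $E_k \leq 8A/k$. When $\ceil{k/2} < 2k_0$, instead use \autoref{thm:mainres:wan26} at $\ceil{k/2}$ together with monotonicity in $k$ of the bound relative to $k_0 \sim e^{D}\Lambda^2$. Either bound is at most $\frac{8}{k}\Lambda$ in that regime, perhaps after replacing $\log(k/\ell^2)$ by its value at the threshold. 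Taking the maximum of the two regimes, and noting that $\max \leq$ sum, yields
\[
E_k \leq \frac{8}{k}\left(A+\Lambda\right),
\]
which is the stated bound. If the second regime does not close cleanly, the fallback is to show directly that for $k \lesssim k_0$ the Wang bound $\sqrt{2/k}\,(B+\ell\log k)$ is dominated by $\Lambda/k$. This holds because $\log k \lesssim D + \log\Lambda$, so the logarithmic factor is absorbed into the $2\ell(\cdots)$ term.
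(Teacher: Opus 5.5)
Your architecture matches the paper's. You use \autoref{thm:mainres:wan26} to find an explicit entry time $k_0$ for the localness condition. You use \autoref{thm:mainres:loc} once $\lceil k/2\rceil\ge 2k_0$, and \autoref{thm:mainres:wan26} with the logarithm frozen near $2k_0$ otherwise. You finish with the doubling trick. The real difference is Step~1.

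The paper certifies localness via Pinsker, $\lvert\min_{\SSS}\pi^k-\pi^*_{\min}\rvert\le\lVert\pi^*-\pi^k\rVert_1\le\sqrt{2V_k/\tau}$, which needs $V_k/\tau\lesssim(\pi^*_{\min})^2$. It then sets $t=\sqrt{k-1}/\ell$ and makes $\frac{2B}{\ell t}$ and $\frac{4\log t}{t}$ each at most $e^{-D}/4$, the latter via \autoref{lm:apx_delayedpfs:bound_logt/t}. This gives the explicit $k_0=1+\ell^2\bigl(\frac{8Be^D}{\ell}\vee 32e^D\log(32e^D)\bigr)^2$, and the criterion is checked at every $k\ge k_0$ directly, without monotonicity.

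Your route goes through \autoref{lm:prelim:decomp_Vk} and the bound $D_h(p\|q)\ge p(\log 2-\frac12)$ for $q<p/2$, propagated by monotonicity of $V_k$ (\autoref{lm:prelim:Vk+1_Vk_Gk2}). It only needs $V_k/\tau\lesssim\pi^*_{\min}$, so your $k_0$ scales like $e^{D}$ rather than $e^{2D}$. Since $k_0$ enters the final bound only through $\ell\log k_0$, this is a sharper route that leaves slack in the displayed constants. The paper's route is cruder but fully explicit.

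One step needs correcting. The displayed terms $2D$, $\log B$, $\log(D+\log 32)$ do not come from a fixed-point estimate. In the paper they are simply $\ell\log\frac{2(2k_0-2)}{\ell^2}$ for the explicit $k_0$ above, after bounding a maximum by a sum. Your estimate also does not produce them. With $a=B+\ell\log(64e^D/\ell^2)$ and $b=2\ell$, the rule $x\ge 2a+2b\log(2b)$ gives $\Lambda=2B+2\ell(D+\log 1024)$. Your conclusion $E_k\le\frac8k(A+\Lambda)$ then has $2B$ where the statement has $B$, so it does not imply the displayed inequality in general.

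The fix inside your framework is easy. Let $\Lambda$ be the displayed bracket $B+2\ell\bigl(\log 64+2D+\log B+\log(D+\log 32)\bigr)$ itself, and check directly that $\Lambda\ge B+\ell\log(128e^D\Lambda^2/\ell^2)$. This covers admissibility at $N=64e^D\Lambda^2$. It also covers the intermediate regime: there $\lceil k/2\rceil-1\le 2N$, hence $E_k\le\frac8k\bigl[B+\ell\log\frac{2N}{\ell^2}\bigr]\le\frac8k\Lambda$. After dividing by $\ell$, the check reduces to $\Lambda/\ell\le\sqrt{32}\,e^{3D/2}B(D+\log 32)$. This holds with a wide margin, for example as soon as $B\ge 1$ and $D\ge 0$. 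Your two regimes then give $E_k\le\frac8k\max(A,\Lambda)$, which is the statement.
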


\begin{proof}
    By Pinsker's inequality and \autoref{thm:mainres:wan26},
    \begin{equation}
        \abs{\min_{(i,j) \in \SSS} \pi^k_{ij}
        - \pi^*_{\min}}
        \leq \norm{\pi^* - \pi^k}_1
        \leq \sqrt{2 \Hdiv{\pi^*}{\pi^k}}
        = \sqrt{2 V_k/\tau} \\
        \leq \frac{2}{\sqrt{k-1}} \left[ B + \ell \, \log \frac{k-1}{\ell^2} \right].
    \end{equation}
    Pose $t = \sqrt{k-1}/\ell$, then the right-hand side rewrites
    $\frac{2B}{\ell\, t} + \frac{4 \log t}{t}$.
    Now,
    \begin{equation}
        t \geq \frac{8 B e^D}{\ell}
        ~\implies~
        \frac{2B}{\ell\, t} \leq e^{-D}/4 \leq \pi^*_{\min}/4
    \end{equation}
    and by \autoref{lm:apx_delayedpfs:bound_logt/t} below,
    \begin{equation}
        t \geq 32 e^D \log(32 e^D)
        ~\implies~
        \frac{4 \log t}{t} \leq e^{-D}/4 \leq \pi^*_{\min}/4.
    \end{equation}
    Thus, 
    $\min_{(i,j) \in \SSS} \pi^k_{ij} \geq \frac{\pi^*_{\min}}{2}$ for all
    \begin{align}
        t = \sqrt{k-1}/\ell &\geq \frac{8 B e^D}{\ell} \vee 32 e^D \log(32 e^D), \\
        \text{i.e.,}\qquad
        k &\geq k_0 = 1 + \ell^2 \left( \frac{8 B e^D}{\ell} \vee 32 e^D \log(32 e^D) \right)^2.
    \end{align}

    Consequently, for all $k \geq 2 k_0$ we have by \autoref{thm:mainres:loc}
    \begin{equation}
        V_k/\tau \leq \frac{2 A^2}{k-k_0}
        \leq \frac{4 A^2}{k},
    \end{equation}
    and for $e \ell^2 + 3 \leq k \leq 2 k_0$ the bound from \autoref{thm:mainres:wan26} \cite{wang2026almost} applies:
    \begin{equation}
        V_k/\tau \leq \frac{2}{k-1} 
        \left[ 
            B + \ell\, \log \frac{k-1}{\ell^2}
        \right]^2
        \leq \frac{4}{k}
        \left[
            B + \ell\, \log \frac{2 (k-2)}{\ell^2}
        \right]^2.
    \end{equation}
    This can be summarized, at the cost of some tightness in the constants, by the following bound valid for all $k \geq e \ell^2 + 3$:
    \begin{equation}
        V_k/\tau \leq \frac{4}{k} \left[ A + B + \ell\, \log \frac{2 (2 k_0 - 2)}{\ell^2} \right]^2.
    \end{equation}
    More explicitly, the $\log$ term reads
    \begin{align}
        \ell \log \frac{2 (2k_0-2)}{\ell^2}
        &= \ell \log \left[ 4 \left( \frac{8 B e^D}{\ell} \vee 32 \, e^D \log(32 e^D) \right)^2 \right] \\
        &= 2 \ell \log \left[ 2 \left( \frac{8 B e^D}{\ell} \vee 32 \, e^D \log(32 e^D) \right) \right] \\
        &\leq 2 \ell \left( 
            \log 64
            + \log(B e^D / \ell)
            + \log \left( e^D \log(32 e^D) \right)
        \right) \\
        &\leq 2 \ell \,\bigg(
            \log 64
            + 2 D + \log(B) + \log (D + \log 32)
        \bigg).
    \end{align}
    Finally, we have shown that
    \begin{equation}
        \forall k \geq e \ell^2 + 3,~~
        V_k/\tau \leq \frac{4}{k} \left[
            A + B 
            + 2 \ell \,\bigg(
                \log 64
                + 2 D + \log(B) + \log (D + \log 32)
            \bigg)
        \right]^2,
    \end{equation}
    and so by \autoref{lm:prelim:Ek2_Vk/2},
    \begin{equation}
        \forall k \geq 2 e \ell^2 + 6,~~
        E_k \leq \sqrt{\frac{8 V_{\ceil{k/2}}}{\tau k}}
        \leq \frac{8}{k} \left[ 
            A + B 
            + 2 \ell \,\bigg(
                \log 64
                + 2 D + \log(B) + \log (D + \log 32)
            \bigg) 
        \right]
    \end{equation}
    as announced.
\end{proof}

\begin{lemma} \label{lm:apx_delayedpfs:bound_logt/t}
    For any $0 < c \leq 1$, we have
    $\frac{\log t}{t} \leq \frac{c}{16}$
    for all
    $t \geq \frac{32}{c} \log \frac{32}{c}$.
\end{lemma}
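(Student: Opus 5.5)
The claim is elementary: for $0<c\le 1$ and every $t \ge t_0 := \frac{32}{c}\log\frac{32}{c}$ we want $\frac{\log t}{t}\le \frac{c}{16}$. I will use monotonicity of $h(t)=\frac{\log t}{t}$. Its derivative is $h'(t)=\frac{1-\log t}{t^2}$, so $h$ is nonincreasing on $[e,\infty)$.

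First I check that $t_0 \ge e$. Since $c\le 1$, we have $32/c \ge 32$, so $\log(32/c)\ge \log 32>3$. Hence $t_0 \ge 96 > e$, and it suffices to prove $h(t_0)\le c/16$.

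Write $u = 32/c \ge 32$, so that $t_0 = u\log u$ and $c/16 = 2/u$. We need
\begin{equation}
    \frac{\log(u\log u)}{u\log u} \le \frac{2}{u},
    \qquad\text{i.e.}\qquad
    \log u + \log\log u \le 2\log u,
    \qquad\text{i.e.}\qquad
    \log\log u \le \log u .
\end{equation}
The last inequality holds because $\log x\le x$ for all $x>0$, applied to $x=\log u>0$.

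The only care needed is the monotonicity domain: confirming $t_0\ge e$ is where the hypothesis $c\le 1$ enters. It guarantees $u\ge 32$, hence $\log u>0$ and $t_0$ lies in the region where $h$ decreases. Beyond that there is no real obstacle.
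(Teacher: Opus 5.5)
Your proof is correct and follows essentially the same route as the paper: set $u = 32/c \geq 32$, check that $u \log u \geq e$ so that $t \mapsto \frac{\log t}{t}$ is decreasing on the relevant range, and reduce to $\log \log u \leq \log u$. The paper does the same, writing $\frac{\log S}{S} = \frac{1}{M}\bigl(1 + \frac{\log\log M}{\log M}\bigr) \leq \frac{2}{M}$ for $S = M \log M$, which is your final inequality divided through by $\log u$.
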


\begin{proof}
    Pose $M = \frac{32}{c} \geq 32$, $S = M \log M$ and let $t \geq S$.
    Then $S \geq 32 \log 32 \geq e$, 
    and since $x \mapsto \frac{\log x}{x}$ is decreasing on $[e, +\infty)$, then
    $\frac{\log t}{t} \leq \frac{\log S}{S}$.
    Now $\frac{\log S}{S} = \frac{\log M + \log \log M}{M \log M} = \frac1M (1 + \frac{\log \log M}{\log M}) \leq \frac2M$, because one can check that $\frac{\log x}{x} \leq 1$ for all $x$.
    Thus $\frac{\log t}{t} \leq \frac{\log S}{S} \leq \frac2M = \frac{c}{16}$, as claimed.
\end{proof}

\fi

\end{document}

%% file: tikz_fig1a.tex
\begin{tikzpicture}[
    vertex set/.style={
        draw,
        fill=white,
        rounded corners=6pt,
        minimum width=1.0cm,
        minimum height=2.4cm,
        inner sep=0pt,
        thick
    },
    vertex/.style={
        circle,
        fill=black,
        inner sep=1.4pt
    },
    edge band/.style={
        fill=blue!65,
        fill opacity=0.35,
        draw=none
    },
    set label/.style={
        font=\large
    }
]

% All three pictures use the same vertical bounds.
\path[use as bounding box]
    (-1.2,-4.5) rectangle (7.3,4.5);

% Draw a vertex set.
% #1 = node name
% #2 = position
% #3 = label
% #4 = label position
% #5 = number of vertices
\newcommand{\VertexSet}[5]{%
    \node[
        vertex set,
        label={[set label,label distance=7pt]#4:#3}
    ] (#1) at #2 {};

    \foreach \k in {1,...,#5} {
        \pgfmathsetmacro{\yoffset}{
            0.8 - 1.6*(\k-1)/(#5-1)
        }
        \node[vertex]
            at ($(#1.center)+(0,\yoffset)$) {};
    }
}

% Broad band joining most of the facing sides.
% #1 = left box
% #2 = right box
\newcommand{\EdgeBand}[2]{%
    \def\bandInset{0.6cm}

    \path[edge band]
        ($(#1.north east)+(0,-\bandInset)$) --
        ($(#2.north west)+(0,-\bandInset)$) --
        ($(#2.south west)+(0,\bandInset)$) --
        ($(#1.south east)+(0,\bandInset)$) --
        cycle;
}

% Box positions
\def\rowSpacing{2.65}

\coordinate (I1pos) at (0,  \rowSpacing);
\coordinate (I2pos) at (0,  0);
\coordinate (I3pos) at (0, -\rowSpacing);

\coordinate (J1pos) at (6,  \rowSpacing);
\coordinate (J2pos) at (6,  0);
\coordinate (J3pos) at (6, -\rowSpacing);

% Invisible boxes used to determine the bands.
\begin{scope}[
    every node/.style={
        minimum width=1.0cm,
        minimum height=2.4cm,
        inner sep=0pt
    }
]
    \node (I1guide) at (I1pos) {};
    \node (I2guide) at (I2pos) {};
    \node (I3guide) at (I3pos) {};

    \node (J1guide) at (J1pos) {};
    \node (J2guide) at (J2pos) {};
    \node (J3guide) at (J3pos) {};
\end{scope}

% Bands are drawn behind the boxes.
\begin{scope}[on background layer]
    \EdgeBand{I1guide}{J1guide}
    \EdgeBand{I1guide}{J2guide}
    \EdgeBand{I1guide}{J3guide}

    \EdgeBand{I2guide}{J2guide}
    \EdgeBand{I2guide}{J3guide}

    \EdgeBand{I3guide}{J3guide}
\end{scope}

% Visible vertex sets.
\VertexSet{I1}{(I1pos)}{$I_1$}{left}{6}
\VertexSet{I2}{(I2pos)}{$I_2$}{left}{6}
\VertexSet{I3}{(I3pos)}{$I_3$}{left}{4}

\VertexSet{J1}{(J1pos)}{$J_1$}{right}{5}
\VertexSet{J2}{(J2pos)}{$J_2$}{right}{5}
\VertexSet{J3}{(J3pos)}{$J_3$}{right}{6}

\end{tikzpicture}

%% file: tikz_fig1b.tex
\begin{tikzpicture}[
    small set/.style={
        draw,
        fill=white,
        rounded corners=5pt,
        minimum width=0.85cm,
        minimum height=1.55cm,
        thick,
        font=\large
    },
    group box/.style={
        draw=blue,
        dashed,
        rounded corners=7pt,
        thick,
        inner sep=7pt
    },
    internal band/.style={
        blue!60,
        opacity=0.45,
        line width=30pt,
        line cap=round
    },
    external band/.style={
        blue!55,
        opacity=0.45,
        line width=7pt,
        line cap=round
    },
    epsilon label/.style={
        text=blue,
        font=\large
    }
]

% Same vertical bounds as the other two pictures.
\path[use as bounding box]
    (-1.5,-4.5) rectangle (3.7,4.5);

% Horizontal and vertical spacing.
\def\columnSpacing{2.2}
\def\rowSpacing{3.5}

% Draw one grouped pair.
% #1 = index
% #2 = vertical coordinate
\newcommand{\PairGroup}[2]{%
    \coordinate (I#1pos) at (0,#2);
    \coordinate (J#1pos) at (\columnSpacing,#2);

    % Invisible guides.
    \node[
        minimum width=0.85cm,
        minimum height=1.55cm
    ] (I#1guide) at (I#1pos) {};

    \node[
        minimum width=0.85cm,
        minimum height=1.55cm
    ] (J#1guide) at (J#1pos) {};

    % Intra-group band.
    \begin{scope}[on background layer]
        \draw[internal band]
            (I#1guide.east) -- (J#1guide.west);
    \end{scope}

    % The two vertex subsets.
    \node[small set] (I#1) at (I#1pos) {$I_{#1}$};
    \node[small set] (J#1) at (J#1pos) {$J_{#1}$};

    % Internal edge-family label.
    \node[epsilon label] at
        ($(I#1)!0.5!(J#1)+(0,-0.1)$)
        {$\mathcal{E}_{#1}$};

    % Dashed enclosure.
    \node[group box, fit=(I#1)(J#1)] (G#1) {};
}

% Three groups.
\PairGroup{1}{ \rowSpacing}
\PairGroup{2}{0}
\PairGroup{3}{-\rowSpacing}

% Inter-group bands.
\begin{scope}[on background layer]

    % Curved band from I_1 to J_3.
    \draw[external band]
        (I1guide.west)
        .. controls (-2.0,2.1) and (-1.7,-1.8) ..
        (J3guide.west);

    % Straight band from I_1 to J_2.
    \draw[external band]
        (I1guide.east) -- (J2guide.west);

    % Straight band from I_2 to J_3.
    \draw[external band]
        (I2guide.east) -- (J3guide.west);
        
\end{scope}

\end{tikzpicture}

%% file: tikz_fig1c.tex
\begin{tikzpicture}[
    vertex/.style={
        circle,
        draw=blue,
        text=blue,
        thick,
        minimum size=0.9cm,
        inner sep=0pt,
        font=\large
    },
    edge/.style={
        blue,
        thick,
        -{Stealth[length=3mm]}
    }
]

% Same vertical bounds as the other two pictures.
\path[use as bounding box]
    (-1.6,-4.5) rectangle (1.6,4.5);

% Vertices.
\node[vertex] (v1) at (0,  2.6) {$1$};
\node[vertex] (v2) at (0,  0.0) {$2$};
\node[vertex] (v3) at (0, -2.6) {$3$};

% Straight edges.
\draw[edge] (v1.south) -- (v2.north);
\draw[edge] (v2.south) -- (v3.north);

% Direct curved edge from 1 to 3.
\draw[edge]
    (v1.south west)
    .. controls (-1.8,1.5) and (-1.8,-1.5) ..
    (v3.north west);

\end{tikzpicture}

%% file: references.bib
@article{altschuler2017near,
  title={Near-linear time approximation algorithms for optimal transport via Sinkhorn iteration},
  author={Altschuler, Jason and Niles-Weed, Jonathan and Rigollet, Philippe},
  journal={Advances in neural information processing systems},
  volume={30},
  year={2017}
}

@inproceedings{dvurechensky2018computational,
  title={Computational optimal transport: Complexity by accelerated gradient descent is better than by Sinkhorn’s algorithm},
  author={Dvurechensky, Pavel and Gasnikov, Alexander and Kroshnin, Alexey},
  booktitle={International conference on machine learning},
  pages={1367--1376},
  year={2018},
  organization={PMLR}
}

@article{ghosal2025convergence,
  title={On the convergence rate of Sinkhorn’s algorithm},
  author={Ghosal, Promit and Nutz, Marcel},
  journal={Mathematics of Operations Research},
  year={2025},
  publisher={INFORMS}
}

@article{idel2016review,
  title={A review of matrix scaling and Sinkhorn's normal form for matrices and positive maps},
  author={Idel, Martin},
  journal={arXiv preprint arXiv:1609.06349},
  year={2016}
}

@article{leger2021gradient,
  title={A gradient descent perspective on Sinkhorn},
  author={L{\'e}ger, Flavien},
  journal={Applied Mathematics \& Optimization},
  volume={84},
  number={2},
  pages={1843--1855},
  year={2021},
  publisher={Springer}
}

@article{qu2025sinkhorn,
  title={On Sinkhorn’s algorithm and choice modeling},
  author={Qu, Zhaonan and Galichon, Alfred and Gao, Wenzhi and Ugander, Johan},
  journal={Operations Research},
  year={2025},
  publisher={INFORMS}
}

@inproceedings{allen2017much,
  title={Much faster algorithms for matrix scaling},
  author={Allen-Zhu, Zeyuan and Li, Yuanzhi and Oliveira, Rafael and Wigderson, Avi},
  booktitle={2017 IEEE 58th Annual Symposium on Foundations of Computer Science (FOCS)},
  pages={890--901},
  year={2017},
  organization={IEEE}
}

@article{sinkhorn1967concerning,
  title={Concerning nonnegative matrices and doubly stochastic matrices},
  author={Sinkhorn, Richard and Knopp, Paul},
  journal={Pacific Journal of Mathematics},
  volume={21},
  number={2},
  pages={343--348},
  year={1967},
  publisher={Mathematical Sciences Publishers}
}

@article{soules1991rate,
  title={The rate of convergence of Sinkhorn balancing},
  author={Soules, George W},
  journal={Linear algebra and its applications},
  volume={150},
  pages={3--40},
  year={1991},
  publisher={Elsevier}
}

@article{hayashi2024finding,
  title={Finding Hall blockers by matrix scaling},
  author={Hayashi, Koyo and Hirai, Hiroshi and Sakabe, Keiya},
  journal={Mathematics of Operations Research},
  volume={49},
  number={4},
  pages={2166--2179},
  year={2024},
  publisher={INFORMS}
}

@article{wang2026almost,
  title={Almost-sharp $O(k^{-1} \log k)$ convergence rate for the Sinkhorn algorithm in the asymptotically scalable case},
  author={Wang, Guillaume},
  journal={arXiv preprint arXiv:2604.26265},
  year={2026}
}
